\numberwithin{equation}{section}
\newtheorem{theorem}{Theorem}
\numberwithin{theorem}{section}
\newtheorem{corollary}[theorem]{Corollary}
\newtheorem{proposition}[theorem]{Proposition}
\newcommand{\vast}{\bBigg@{3}}
\theoremstyle{remark}
\newtheorem*{remark}{Remark}
\newtheorem*{definition}{Definition}
\numberwithin{theorem}{section}
\numberwithin{equation}{section}
\newcommand{\N}{\mathbb{N}}
\newcommand{\Z}{\mathbb{Z}}
\newcommand{\R}{\mathbb{R}}
\newcommand{\C}{\mathbb{C}}
\newcommand{\Q}{\mathbb{Q}}
\newcommand{\SL}{{\text {\rm SL}}}
\newcommand{\sgn}{\operatorname{sgn}}
\def\H{\mathbb{H}}
\newcommand{\sm}[4]{\left(\begin{smallmatrix}#1&#2\\ #3&#4
	\end{smallmatrix} \right)}
\begin{document}
\title{Taylor coefficients of non-holomorphic Jacobi forms and applications}

\thanks{The research of the author is supported by the Alfried Krupp Prize for Young University Teachers of the Krupp foundation and the research leading to these results receives funding from the European Research Council under the European Union's Seventh Framework Programme (FP/2007-2013) / ERC Grant agreement n. 335220 - AQSER}

\keywords{
	cranks, harmonic Maass forms, Jacobi forms, Joyce invariants, lowering operator, mock modular forms, moments, ranks\\
	2010 Mathematics subject classifications: 11F12, 11F20, 11F37, 11F50, 11P82, 11P83}

\author{Kathrin Bringmann}

\begin{abstract}
In this paper, we prove modularity results of Taylor coefficients of certain non-holomorphic Jacobi forms. It is well-known that Taylor coefficients of holomorphic Jacobi forms are quasimoular forms. However recently there has been a wide interest for Taylor coefficients of non-holomorphic Jacobi forms for example arising in combinatorics. In this paper, we show that such coefficients still inherit modular properties. We then work out the precise spaces in which these coefficients lie for two examples.
\end{abstract}

\maketitle
\begin{center}
{\it In honor of Don Zagier who has been a great inspiration.}
\end{center}

\section{Introduction and statement of results}

As motivating example, we start with the
generating function for partitions. As usual let $p(n)$ denote the number of partitions of $n$. By Euler, we have
\begin{eqnarray}\label{eq:partgen}
P(q):= \sum_{n=0}^{\infty} p(n) q^{n}
= \frac{q^{\frac{1}{24}}}{\eta(\tau)},
\end{eqnarray}
where
$\eta(\tau):=
q^{ \frac{1}{24}}\, \prod_{n=1}^{\infty}(1-q^n)$
is {\it Dedekind's $\eta$-function}, a weight $\frac12$ modular form
($q:=e^{2 \pi i \tau}$ throughout).
Of the the many consequences of the modularity properties of $P,$ one of the most striking ones is
the three congruences due to Ramanujan,
namely
\begin{align*}
p(5n+4) & \equiv 0 \pmod 5,\\
p(7n+5) & \equiv 0 \pmod 7, \\
p(11n+6) & \equiv 0 \pmod {11}.
\end{align*}

To explain the congruences with modulus $5$ and $7$, Dyson \cite{Dy}
introduced the \textit{rank} of a partition, which is defined to be
its largest part minus the number of its parts.
Dyson conjectured that the partitions of $5n+4$ (resp. $7n+5$) form
$5$ (resp. $7$) groups of equal size if sorted by their ranks
modulo $5$ (resp. $7$).
This conjecture was proven by Atkin and Swinnerton-Dyer \cite{AS}.
If $N(m,n)$ denotes the number of partitions of $n$  with rank $m$,
then
we have the generating  function
\begin{equation}\label{rank}
\begin{aligned}
R(\zeta;q):=\,& 1 + \sum_{m \in \Z}
\sum_{n=1}^{\infty}
N(m,n) \zeta^m q^n
\\=\,&1 + \sum_{n=1}^\infty \frac{q^{n^2}}{(\zeta q;q)_n (\zeta^{-1}q;q)_n}
= \frac{(1-\zeta)}{(q;q)_{\infty}} \sum_{n \in \Z}
\frac{(-1)^n q^{\frac{n}{2}(3n+1) }}{1-\zeta q^n},
\end{aligned}
\end{equation}
where $(a;q)_{n}:= \prod_{j=0}^{n-1}(1-aq^j)$ and $(a;q)_{\infty}:=
\lim_{n \to \infty} (a;q)_n $.
In particular
\begin{align*}
R(1;q)&=P(q), \\
R(-1;q)&=f(q):=1+\sum_{n=1}^{\infty}\frac{q^{n^2}}{(-q;q)_n^2 }.
\end{align*}
The function $f(q)$ is one of the \textit{mock theta functions},
defined by Ramanujan in his last letter to Hardy. Such a mock theta function can be completed by adding a non-holomorphic piece to obtain a harmonic Maass form. These non-holomorphic automorphic forms generalize classical modular forms. Instead of being holomorphic they are annihilated by a hyperbolic Laplace operator; see Subsection 2.3 for the precise definition. More generally, $R(\zeta;q)$ is a (non-holomorphic) Jacobi form.

The Taylor coefficients of $R(\zeta;q)$ are of combinatorial interest.
To be more precise, Atkin and Garvan \cite{AG} introduced the
{\it $k$-th rank moments}
\begin{equation*}
N_k(n) := \sum_{m \in \Z} m^k N(m,n).
\end{equation*}
Andrews \cite{An2} then studied
  the    \textit{symmetrized $k$-th rank moment function}
\begin{equation*}
\eta_k(n):= \sum_{m  \in\Z}
\left(
\begin{matrix}
m + \Big[\frac{k-1}{2} \Big] \\
k
\end{matrix}
\right)
N(m,n).
\end{equation*}
Using the symmetry $N(-m,n) =
N(m,n)$, one sees that one only needs to consider even rank moments. For these we define the rank
generating function
$$
\mathcal{N}_{2\ell}(q):= \sum_{n=0}^\infty\left(\sum_{m\in \Z}m^{2\ell}N(m,n)\right)q^n.
$$
Summing up these moments gives the whole rank generating function $(\zeta:=e^{2\pi iz})$
\begin{equation}\label{TaylorR}
R(\zeta;q) =\sum_{\ell =0}^\infty\mathcal{N}_{2\ell}(q) \frac{\left(2\pi i z\right)^{2\ell}}{(2\ell)!}.
\end{equation}
So the $\mathcal N_{2\ell}$ are basically the Taylor coefficients of $R(\zeta;q)$.

Let us next describe what is known about modularity of these functions.  Firstly
$$
\mathcal{N}_0(q)=P(q)=\frac{1}{(q)_\infty},
$$
so this case yields (up to a $q$-power) a weight $-\frac12$ modular form.
For the next example, $\mathcal N_2$, it was shown in \cite{Br} that it can be "completed" to a harmonic Maass form of weight $\frac32$. Moreover the general $R_k$ are  known to give linear combinations of derivatives of harmonic Maass forms, see \cite{BGM}.
However, in particular for applications one requires the pure modular objects as well as their behavior under differential operators.
In this paper, we explicitly give the completed modular object as well as the space in which the function lie.

For this define
$$
r_{2\ell-1}= r^+_{2\ell-1}+r^-_{2\ell-1}
$$
with
\begin{align}\label{splus}
r^+_{2\ell-1}(\tau)&:=(2\pi i)^{2\ell-1}\sum_{0\leq j + n\leq \ell}\frac{B_{2n}\left(\frac12\right)}{(2n)!(\ell-j-n)!}\left(\frac{E_2(\tau)}{8}\right)^{\ell-j-n}\frac{\mathcal{N}_{2j}(q)q^{-\frac{1}{24}}}{(2j)!}, \\
\label{sminus}
r^-_{2\ell-1}(\tau) &:= \frac{1}{(2\ell-1)!} \left[\frac{\partial^{2\ell-1}}{\partial z^{2\ell-1}}\left(\zeta^{-1} q^{-\frac16} S(3z+\tau; 3\tau)e^{-\frac{\pi^2}{2}E_2(\tau)z^2}\right)\right]_{z=0}.
\end{align}
Here $B_n(x)$ is the $n$-th Bernoulli polynomial, $E_2$ is the weight $2$ (quasimodular) Eisenstein series given in \eqref{E2holdef} and we also require the even function ($\tau = u+iv, u,v\in \R$)
\begin{equation}\label{S}
S(z; \tau):=\sum_{n\in\frac12+\Z}\left(\sgn(n)-E\left(\left(n+\frac{y}{v}\right)\sqrt{2v}\right)\right)
(-1)^{n-\frac12} q^{-\frac{n^2}{2}} e^{-2\pi inz},
\end{equation}
where
$
E(z):=2\int_0^z e^{-\pi t^2}dt.
$

\begin{theorem}\label{rankmain}
For $\ell\in \N$, $r_{2\ell-1}$ transforms for all $(\begin{smallmatrix} a&b\\c&d\end{smallmatrix})\in \SL_2(\Z)$ as
\begin{equation}\label{ST}
r_{2\ell-1} \left(\frac{a\tau+b}{c\tau+d}\right)=\psi^{-1} \left(\begin{matrix}a&b\\c&d\end{matrix}\right) (c\tau+d)^{2\ell-\frac12} r_{2\ell-1}(\tau),
\end{equation}
where $\psi$ is the multiplier of $\eta$ (see \eqref{multeta}). Moreover
\begin{equation}\label{Lsl}
	L(r_{2\ell-1}(\tau)) =\frac{i\sqrt{3}}{\sqrt{2}}\frac{v^{\frac12}\overline{\eta(-\tau)}}{(\ell-1)!}\left(-\frac{\pi^2\widehat{E_2}(\tau)}{2}\right)^{\ell-1},
\end{equation}
where $L:= -2 iv^2 \frac{\partial}{\partial \overline{\tau}}$ is the {Maass lowering operator}.
\end{theorem}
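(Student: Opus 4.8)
The plan is to realize $r_{2\ell-1}$ as the $(2\ell-1)$-st $E_2$-twisted Taylor coefficient in $z$ of the modular completion of the rank, and then to read off \eqref{ST} from the general modularity theorem for Taylor coefficients of non-holomorphic Jacobi forms established above. The crucial algebraic identification comes from the Bernoulli factor: since $\sum_{n\ge0}B_{2n}(\tfrac12)\frac{t^{2n}}{(2n)!}=\frac{t/2}{\sinh(t/2)}$, putting $t=2\pi i z$ gives $\sum_{n\ge0}B_{2n}(\tfrac12)\frac{(2\pi i z)^{2n}}{(2n)!}=\frac{\pi z}{\sin(\pi z)}=\frac{-2\pi i z\,\zeta^{1/2}}{1-\zeta}$, and multiplying the Euler-product form \eqref{rank} of $R$ by this factor cancels the $(1-\zeta)$ and turns the holomorphic part into an Appell--Lerch sum: explicitly $q^{-\frac1{24}}R(\zeta;q)\cdot\frac{\pi z}{\sin(\pi z)}=2\pi i z\,\widehat\Psi^+$, where $\widehat\Psi^+:=-\zeta^{1/2}q^{-\frac1{24}}(q;q)_\infty^{-1}\sum_n\frac{(-1)^nq^{\frac n2(3n+1)}}{1-\zeta q^n}$. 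Expanding instead via \eqref{TaylorR}, the Bernoulli generating function, and the exponential (using $\frac{(2\pi i z)^2}{8}=-\frac{\pi^2}{2}z^2$, so that $e^{-\frac{\pi^2}{2}E_2z^2}$ supplies the factors $\frac1{(\ell-j-n)!}(\frac{E_2}{8})^{\ell-j-n}$) recovers \eqref{splus}; equivalently $r^+_{2\ell-1}=[z^{2\ell-1}]\big(\widehat\Psi^+ e^{-\frac{\pi^2}{2}E_2z^2}\big)$. Since $r^-_{2\ell-1}=[z^{2\ell-1}]\big(\widehat\Psi^- e^{-\frac{\pi^2}{2}E_2z^2}\big)$ with $\widehat\Psi^-:=\zeta^{-1}q^{-\frac16}S(3z+\tau;3\tau)$ by \eqref{sminus}, one obtains $r_{2\ell-1}(\tau)=[z^{2\ell-1}]\big(\widehat\Psi(z;\tau)\,e^{-\frac{\pi^2}{2}E_2(\tau)z^2}\big)$ for $\widehat\Psi:=\widehat\Psi^++\widehat\Psi^-$.

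Here $\widehat\Psi$ is the Zwegers-type completion of the Appell--Lerch sum $\widehat\Psi^+$ by the non-holomorphic piece $\widehat\Psi^-$ built from $S$, and is a non-holomorphic Jacobi form of weight $\tfrac12$; its index is pinned down by the $E_2$-twist. Indeed, using $E_2(\frac{a\tau+b}{c\tau+d})=(c\tau+d)^2E_2(\tau)-\frac{6ic(c\tau+d)}{\pi}$ and substituting $z\mapsto z/(c\tau+d)$ shows that $e^{-\frac{\pi^2}{2}E_2z^2}\widehat\Psi$ transforms free of the elliptic Gaussian precisely when $3\pi i+2\pi i m=0$, i.e. of index $m=-\tfrac32$; the normalization $q^{-\frac1{24}}$ converts $\mathcal{N}_0=(q;q)_\infty^{-1}$ into $\eta^{-1}$ and hence the $\eta$-multiplier $\psi$ into $\psi^{-1}$. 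Passing to the Taylor coefficient of order $2\ell-1$ raises the weight by $2\ell-1$, so the general theorem yields \eqref{ST} with weight $\tfrac12+(2\ell-1)=2\ell-\tfrac12$ and multiplier $\psi^{-1}$.

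For \eqref{Lsl} I would apply $L=-2iv^2\frac{\partial}{\partial\overline\tau}$ directly to $r_{2\ell-1}(\tau)=[z^{2\ell-1}]\big(\widehat\Psi\,e^{-\frac{\pi^2}{2}E_2z^2}\big)$, using that $L$ commutes with the extraction of the $z^{2\ell-1}$-coefficient. Since $\widehat\Psi^+$, the prefactor $\zeta^{-1}q^{-\frac16}$, and the twist $e^{-\frac{\pi^2}{2}E_2z^2}$ are all holomorphic in $\tau$ (the quasimodular $E_2$ being holomorphic), $L$ annihilates everything except the genuinely non-holomorphic $S(3z+\tau;3\tau)$. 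Differentiating $S$ in \eqref{S} with $\frac{\partial}{\partial\overline\tau}$ acts only on the error function, and $\frac{d}{dx}E(x)=2e^{-\pi x^2}$ collapses the resulting Gaussian against $q^{-\frac{n^2}{2}}$ into $\overline{q^{n^2/2}}$-type terms, so that $\frac{\partial}{\partial\overline\tau}S$ is (a power of $v$ times) an antiholomorphic unary theta; the shift $3z+\tau$ makes the theta argument $n+\tfrac13$, and via the Jacobi triple product this theta is identified with $\overline{\eta(-\tau)}$, while assembling $-2iv^2$ against the $v$-power of the shadow produces the $v^{1/2}$ and, together with the $\sqrt{2v}$ of $E$ and the $\sqrt3$ coming from the modulus $3\tau$, the constant $\frac{i\sqrt3}{\sqrt2}$.

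The main obstacle is the appearance of the \emph{completed} form $\widehat{E_2}$ rather than the holomorphic $E_2$ of the definition, and this is exactly where non-holomorphicity is decisive. The $\frac1v$-Gaussian in the elliptic variable carried by the shadow supplies the factor $e^{\frac{3\pi}{2v}z^2}$, which combines with the holomorphic twist into $e^{-\frac{\pi^2}{2}E_2z^2}e^{\frac{3\pi}{2v}z^2}=e^{-\frac{\pi^2}{2}\widehat{E_2}z^2}$ since $\widehat{E_2}=E_2-\frac{3}{\pi v}$, whose coefficient of $z^{2(\ell-1)}$ is $\frac1{(\ell-1)!}\big(-\frac{\pi^2}{2}\widehat{E_2}\big)^{\ell-1}$. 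The degree count is consistent, $2\ell-1=1+2(\ell-1)$: one power of $z$ is carried by the odd shadow theta and $2(\ell-1)$ by the twist. Carefully tracking these $\frac1v$-contributions, the $\sqrt{2v}$ and $\sqrt3$ normalizations, and the error-function constant so as to recover simultaneously $\widehat{E_2}^{\ell-1}/(\ell-1)!$, the factor $v^{1/2}\overline{\eta(-\tau)}$, and the exact constant $\frac{i\sqrt3}{\sqrt2}$ is the delicate part of the computation.
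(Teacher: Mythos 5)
Your proposal follows essentially the same route as the paper: realize $r_{2\ell-1}$ as the odd Taylor coefficients of $\bigl(\text{completed rank}\bigr)\cdot e^{-\frac{\pi^2}{2}E_2(\tau)z^2}$, identify the holomorphic part with $-\zeta^{-1}A_3(z,-\tau;\tau)/\eta(\tau)$ via the Bernoulli generating function, deduce \eqref{ST} from the Jacobi transformation of $\widehat{A}_3$ together with \eqref{E2trans} and \eqref{multeta}, and obtain \eqref{Lsl} by letting $L$ fall on the error function in $S$, merging the resulting Gaussian $e^{\frac{3\pi z^2}{2v}}$ with the $E_2$-twist into $e^{-\frac{\pi^2}{2}\widehat{E_2}(\tau)z^2}$, and summing the shadow theta to $-\eta(-\overline\tau)$. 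The one imprecision is your claim that $\widehat\Psi=\widehat\Psi^++\zeta^{-1}q^{-\frac16}S(3z+\tau;3\tau)$ is itself the Jacobi-form completion: Zwegers' completion of $-\zeta^{-1}A_3(z,-\tau;\tau)/\eta(\tau)$ actually contributes the symmetrized pair $\tfrac12\zeta^{-1}q^{-\frac16}S(3z+\tau;3\tau)-\tfrac12\zeta q^{-\frac16}S(3z-\tau;3\tau)$ (this is the paper's $\widehat R$), and only because the discrepancy between this pair and your single term is an even function of $z$ (as $S$ is even) do the odd Taylor coefficients --- hence $r^-_{2\ell-1}$ as defined in \eqref{sminus} --- agree; you should route the modularity argument through the symmetrized object.
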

\begin{remark}
Instead of $E_2$ in \eqref{splus} and \eqref{sminus} one can also use $\frac1v$. The completed object $\widetilde{r}_{2\ell-1}$ obtained that way basically maps to $\widetilde{r}_{2\ell-3}$ under lowering.

\end{remark}
Theorem \ref{rankmain} easily implies the following main result from \cite{Br}. To state this set
\[
\mathcal{M}(\tau):=\mathcal{R}(\tau)+\mathcal{N}(\tau)-\frac{1}{24\eta(24\tau)}+\frac{E_2(24\tau)}{8\eta(24\tau)}
\]
with
\[
\mathcal{R}(\tau):=\frac12R_2\left(q^{24}\right)q^{-1},\quad\mathcal{N}(\tau):=\frac{i}{4\sqrt{2}\pi}\int_{-\overline{\tau}}^{i\infty}\frac{\eta(24w)}{(-i(\tau+w))^{\frac{3}{2}}} dw.
\]
\begin{corollary}\label{DukeCor}
The function $\widehat M(\tau):= \mathcal M (\frac{\tau}{24})$ is a harmonic Maass form of weight $\frac32$ transforming, for all $(\begin{smallmatrix} a&b\\c&d \end{smallmatrix}) \in \SL_2(\Z)$, as
$$
\widehat M\left( \frac{a\tau+b}{c\tau+d}\right)=\psi^{-1} \left(\begin{matrix}a&b\\c&d\end{matrix}\right) (c\tau+d)^\frac32  \widehat M(\tau).
$$
\end{corollary}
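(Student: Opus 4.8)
The plan is to specialize Theorem \ref{rankmain} to $\ell=1$ and to identify the resulting function $r_1=r_1^++r_1^-$ with a constant multiple of $\widehat M$. First I would evaluate $r_1^+$ from \eqref{splus}. The constraint $0\le j+n\le 1$ leaves only the three pairs $(j,n)\in\{(0,0),(1,0),(0,1)\}$; substituting $B_0(\tfrac12)=1$, $B_2(\tfrac12)=-\tfrac1{12}$, using $\mathcal{N}_0(q)q^{-1/24}=1/\eta(\tau)$ and writing $R_2=\mathcal{N}_2$ collapses \eqref{splus} to
\begin{equation*}
r_1^+(\tau)=2\pi i\left(\frac{E_2(\tau)}{8\eta(\tau)}-\frac{1}{24\eta(\tau)}+\frac12\,\mathcal{N}_2(q)\,q^{-\frac1{24}}\right).
\end{equation*}
Applying the substitution $\tau\mapsto 24\tau$ (which sends $q$ to $q^{24}$ and $\eta(\tau)$ to $\eta(24\tau)$, and turns $q^{-1/24}$ into $q^{-1}$) this is precisely $2\pi i$ times the holomorphic part $\mathcal{R}(\tau)-\tfrac{1}{24\eta(24\tau)}+\tfrac{E_2(24\tau)}{8\eta(24\tau)}$ of $\mathcal{M}$. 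Hence on the holomorphic side $\mathcal{M}(\tau)=\frac{1}{2\pi i}r_1^+(24\tau)$, which after the rescaling in $\widehat M(\tau)=\mathcal{M}(\tau/24)$ suggests the clean identity $\widehat M=\frac{1}{2\pi i}r_1$.

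The genuinely delicate step, and the main obstacle, is matching the non-holomorphic pieces, i.e. showing $\frac{1}{2\pi i}r_1^-(24\tau)=\mathcal{N}(\tau)$. Setting $\ell=1$ in \eqref{sminus} and differentiating once in $z$ at $z=0$ yields a combination of $S(\tau;3\tau)$ and its first partial derivative (the factor $e^{-\frac{\pi^2}{2}E_2z^2}$ contributes nothing at $z=0$). I would then unfold \eqref{S}: rewriting $\sgn(n)-E((n+\tfrac{y}{v})\sqrt{2v})$ as a complementary error integral exhibits $S$ as a non-holomorphic Eichler-type integral of a unary theta function, here essentially $\eta$. After the change of variables induced by $\tau\mapsto 24\tau$ and by the integration variable in \eqref{S}, the resulting integral must be shown to agree with $\mathcal{N}$, including the overall normalizing constant $\tfrac{i}{4\sqrt2\,\pi}$. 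Tracking these constants and the contour through the $24$-rescaling is where the real work lies; granting it, one obtains $\widehat M=\frac{1}{2\pi i}r_1$.

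With this identification both assertions of the corollary follow from Theorem \ref{rankmain}. The transformation law is \eqref{ST} with $\ell=1$, where $2\ell-\tfrac12=\tfrac32$ and the multiplier is $\psi^{-1}$; the harmless scalar $\frac{1}{2\pi i}$ affects neither, so $\widehat M$ transforms with weight $\tfrac32$ and multiplier $\psi^{-1}$ under $\SL_2(\Z)$. For harmonicity I would use \eqref{Lsl} with $\ell=1$, where the power of $\widehat{E_2}$ is zero, giving
\begin{equation*}
L(r_1(\tau))=\frac{i\sqrt3}{\sqrt2}\,v^{\frac12}\,\overline{\eta(-\tau)}.
\end{equation*}
Since the weight-$\tfrac32$ operator $\xi_{3/2}(f):=2iv^{3/2}\,\overline{\partial_{\overline{\tau}}f}$ satisfies $\xi_{3/2}(f)=v^{-1/2}\,\overline{L(f)}$, this shows that $\xi_{3/2}(\widehat M)$ is a nonzero constant multiple of the holomorphic weight-$\tfrac12$ form $\eta$, exactly its shadow. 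Consequently $\Delta_{3/2}\widehat M=-\xi_{1/2}\xi_{3/2}\widehat M=0$, so $\widehat M$ is annihilated by the weight-$\tfrac32$ hyperbolic Laplacian $\Delta_{3/2}$; together with the evident moderate growth at the cusps this proves that $\widehat M$ is a harmonic Maass form of weight $\tfrac32$.
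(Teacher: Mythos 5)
Your proposal is correct and follows essentially the same route as the paper: both reduce the corollary to the identity $\widehat M=\frac{1}{2\pi i}r_1$, match the holomorphic parts by evaluating \eqref{splus} at $\ell=1$, and match the non-holomorphic parts by comparing $S$ with the Eichler integral $\mathcal N$ through incomplete gamma functions. The one computation you defer --- the Fourier-expansion comparison establishing \eqref{nonhol3} --- is exactly what the paper carries out via \eqref{incomplete}, and your explicit $\xi_{3/2}$ argument for harmonicity is a standard step the paper leaves implicit.
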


Our second example comes from Joyce invariants. In \cite{MO}, the authors studied the (slightly modified) generating function of Joyce invariants, which turn out to be the following $q$-series ($k\in2\N$)
$$
\mathbb{J}_k(\tau):=\frac12\sum_{n\in\Z\setminus\{0\}}\frac{n^{k-1}q^{n^2}}{1-q^n}.
$$
Define
\begin{equation*}
\widehat{\mathbb{J}}_k:=\mathbb{J}_k+\frac{\delta_{k=2}}{8\pi v}+\frac{(k-2)!(-1)^{\frac{k}{2}+1}}{\Gamma\left(\frac{k-1}{2}\right)^2 2^{k+1}}\sum_{\nu\in\{-1,0\}}\left[\vartheta_\nu, s_\nu\right]_{\frac{k}{2}-1}
\end{equation*}
with Rankin Cohen brackets $[.,.]_\kappa$ as in \eqref{RC} and
\begin{equation}\label{frn}
\begin{aligned}
\vartheta_\nu(\tau)&:=q^{\frac{\nu^2}{4}}\vartheta\left(\nu\tau+\frac12; 2\tau\right),\qquad \\
s_\nu(\tau)&:=\sqrt{\pi}\sum_{n\in\frac{\nu+1}{2}+\Z} \left\lvert
n+\frac{\nu}{2}\right\rvert \Gamma\left(-\frac12, 4\pi\left(n+\frac{\nu}{2}\right)^2 v\right)q^{-\left(n+\frac{\nu}{2}\right)^2}.
\end{aligned}
\end{equation}
Here $\vartheta$ is the Jacobi theta function given in \eqref{thedefn} and $\Gamma(\alpha,x):= \int_x^\infty e^{-t} t^{\alpha-1} dt$ is the incomplete Gamma function.
\begin{remark}
	We have that
	\begin{equation}\label{rewritetheta}
	\vartheta_{-1}(\tau)=-\theta_1(2\tau), \quad \vartheta_0(\tau)=-\theta_3(2\tau),
	\end{equation}	
	where
	$$
	\theta_1(\tau):=\sum_{n\in\Z}q^{\frac{n^2}{2}}, \quad \theta_3(\tau):=\sum_{n\in\Z}q^{\frac{1}{2}\left(n+\frac12\right)^2}.
	$$
\end{remark}
\begin{theorem}\label{joycecomple}
For all $M=(\begin{smallmatrix} a&b\\c&d\end{smallmatrix})\in \SL_2(\Z)$, the following transformation law holds
\begin{equation*}
\widehat{\mathbb J}_k\left(\frac{a\tau+b}{c\tau+d}\right) =(c\tau+d)^k\widehat{\mathbb J}_k(\tau).
\end{equation*}
We have
\begin{equation}\label{lower}
L\left(\widehat{\mathbb J}_k(\tau)\right)=-\frac{\delta_{k=2}}{8\pi}-\frac{i(k-1)}{8(2\pi i)^{k-1}} \sqrt{v}
\left(\overline{\theta_1(2\tau)}\vartheta_{k-1, -1}(\tau)+\overline{\theta_3(2\tau)}\vartheta_{k-1, 0}(\tau)\right),
\end{equation}
where
\[
\vartheta_{\ell, \nu}(\tau):=\left[\frac{\partial^{\ell-1}}{\partial z^{\ell-1}}\left(\vartheta\left(z+\nu\tau+\frac12; 2\tau\right)e^{\pi i\nu z} q^{\frac{\nu^2}{4}} e^{\frac{\pi z^2}{4v}}\right)\right]_{z=0}
\]
is an almost holomorphic modular form of weight $\ell-\frac12$ (with some multiplier).
\end{theorem}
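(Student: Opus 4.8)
The plan is to treat $\widehat{\mathbb J}_k$ exactly as $r_{2\ell-1}$ is handled in Theorem \ref{rankmain}: realize $\mathbb J_k$ as a Taylor coefficient, in the elliptic variable, of a completable non-holomorphic Jacobi form, and then read off both assertions from the modular behavior of the completion. Concretely I would introduce the two-variable Appell--Lerch-type lift
\[
\Phi(z;\tau):=\frac12\sum_{n\in\Z\setminus\{0\}}\frac{q^{n^2}e^{2\pi i nz}}{1-q^n},
\]
so that $\mathbb J_k(\tau)=(2\pi i)^{-(k-1)}\big[\partial_z^{k-1}\Phi(z;\tau)\big]_{z=0}$. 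As in Zwegers' work (and as used throughout the paper), $\Phi$ fails to be a Jacobi form only through a theta-theoretic defect that is repaired by a non-holomorphic completion built from the vector $(\vartheta_\nu)_\nu$ and its Eichler-type integrals $(s_\nu)_\nu$; the functions $s_\nu$ of \eqref{frn}, with their incomplete-Gamma $q$-expansions, are exactly these period integrals. Expanding the completed Jacobi form in powers of $z$ and extracting the $(k-1)$-st coefficient, after the $\frac1v$-normalization of the Remark following Theorem \ref{rankmain}, yields $\widehat{\mathbb J}_k$: the holomorphic quasimodular part produces $\mathbb J_k$ together with the $E_2$-pieces that degenerate into the $\frac{\delta_{k=2}}{8\pi v}$ correction, and the non-holomorphic part produces the Rankin--Cohen brackets $[\vartheta_\nu,s_\nu]_{\frac{k}{2}-1}$.

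For the transformation law I would argue representation-theoretically. By \eqref{rewritetheta} the pair $(\vartheta_{-1},\vartheta_0)=(-\theta_1(2\tau),-\theta_3(2\tau))$ is a vector-valued modular form of weight $\frac12$ for $\SL_2(\Z)$ under the Weil representation of the discriminant form $\Z/2\Z$, while $(s_{-1},s_0)$ transforms, up to its non-holomorphic correction, as a vector-valued form of the complementary weight $\frac32$ for the dual representation. The Rankin--Cohen bracket of a weight-$\frac12$ and a weight-$\frac32$ object at level $\frac{k}{2}-1$ has weight $\frac12+\frac32+2\big(\frac{k}{2}-1\big)=k$, and the sum over $\nu\in\{-1,0\}$ is precisely the $\SL_2(\Z)$-invariant pairing of the representation with its dual; hence $\sum_\nu[\vartheta_\nu,s_\nu]_{\frac{k}{2}-1}$ is scalar-valued of weight $k$. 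The modular anomaly carried by the holomorphic $\mathbb J_k$ is cancelled exactly by this completion, giving $\widehat{\mathbb J}_k\big(\frac{a\tau+b}{c\tau+d}\big)=(c\tau+d)^k\widehat{\mathbb J}_k(\tau)$, with the $\delta_{k=2}$ term absorbing the borderline weight-$2$ (quasimodular $E_2$) anomaly.

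The lowering identity \eqref{lower} then follows by applying $L=-2iv^2\partial_{\bar\tau}$ termwise. Since $\mathbb J_k$ is holomorphic, $L(\mathbb J_k)=0$, while the checkpoint $L\big(\frac1v\big)=-1$ produces the contribution $-\frac{\delta_{k=2}}{8\pi}$. For the bracket terms I would use the shadow relation gotten by differentiating the incomplete Gamma functions in \eqref{frn}: with $m=n+\frac{\nu}{2}$, the identities $\frac{d}{dx}\Gamma(-\frac12,x)=-x^{-3/2}e^{-x}$ and $\partial_{\bar\tau}(4\pi m^2v)=2\pi i m^2$ give, after the cancellation $\frac{m^2}{|m|^3}=\frac{1}{|m|}$, that each summand $\Gamma(-\frac12,4\pi m^2v)q^{-m^2}$ collapses to a multiple of $v^{-3/2}\overline{q^{m^2}}$; hence $L(s_\nu)$ equals the explicit constant $-\frac12$ times $\sqrt v$ times a conjugated weight-$\frac12$ unary theta function, which by \eqref{rewritetheta} is $\overline{\theta_1(2\tau)}$ or $\overline{\theta_3(2\tau)}$. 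As $\vartheta_\nu$ is holomorphic, $L$ acts on $[\vartheta_\nu,s_\nu]_{\frac{k}{2}-1}$ only through $s_\nu$; combining the Leibniz rule for $L$ with the Rankin--Cohen structure collapses each term into $\sqrt v$ times the appropriate $\overline{\theta_j(2\tau)}$ times the almost-holomorphic coefficient $\vartheta_{k-1,\nu}$, the Gaussian $e^{\pi z^2/(4v)}$ in $\vartheta_{\ell,\nu}$ being exactly the $\frac1v$-correction that renders these coefficients almost-holomorphic of weight $\ell-\frac12$. Collecting the constant with the normalization $\frac{(k-2)!(-1)^{\frac{k}{2}+1}}{\Gamma(\frac{k-1}{2})^2 2^{k+1}}$ yields the prefactor $-\frac{i(k-1)}{8(2\pi i)^{k-1}}$ of \eqref{lower}.

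The main obstacle is not the conceptual structure but the constant bookkeeping. One must verify that the $z$-expansion of the completed Jacobi form reproduces the precise Rankin--Cohen level $\frac{k}{2}-1$ and the exact normalization $\frac{(k-2)!(-1)^{\frac{k}{2}+1}}{\Gamma(\frac{k-1}{2})^2 2^{k+1}}$, including the degeneration that generates the $\frac{\delta_{k=2}}{8\pi v}$ term; this means tracking the binomial and Bernoulli-type factors produced when the $z$-derivatives simultaneously hit the Gaussian $e^{\pi z^2/(4v)}$ and the theta factor (the analogue of the $B_{2n}(\frac12)$ coefficients in \eqref{splus}), and then checking that $L$ commutes through the Rankin--Cohen bracket with exactly the weight-dependent factor $(k-1)$. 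These normalization verifications are where I expect the real work to lie.
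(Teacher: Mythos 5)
Your overall architecture (an Appell--Lerch lift, a Zwegers-type completion, $L$ annihilating the holomorphic part, and differentiation of the incomplete Gamma functions) matches the paper's in spirit, but two of your key steps do not actually go through as stated. First, the transformation law. The paper realizes $2\mathbb{J}_{\ell+1}(\tau)$ as $\frac{1}{(2\pi i)^\ell}\lim_{w\to 0}[\partial_z^\ell A_2(w,z;\tau)]_{z=-\tau}$, keeping \emph{both} elliptic variables of the level-two Appell function, completes with $e^{\pi zw/v}\widehat{A}_2(w,z;\tau)$, and reads off weight $\ell+1$ directly from the exact Jacobi transformation \eqref{Amod} of $\widehat{A}_2$ before taking $w\to 0$. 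You specialize the first variable away at the outset and then try to recover modularity by arguing that $\sum_\nu[\vartheta_\nu,s_\nu]_{\frac k2-1}$ is itself a scalar-valued modular form of weight $k$ via the Weil representation. That claim is false: $(s_{-1},s_0)$ is only the non-holomorphic Eichler-integral part of a vector-valued harmonic object, so it transforms with a period-type error term, and it is precisely that error which must cancel the modular anomaly of $\mathbb{J}_k$. Asserting that the anomaly ``is cancelled exactly by this completion'' is asserting the theorem; without the two-variable Appell structure (or some substitute computing how $\mathbb{J}_k$ itself fails to be modular) you have no way to verify the cancellation. Relatedly, the $\frac{\delta_{k=2}}{8\pi v}$ term does not come from degenerating $E_2$-pieces --- there is no $E_2$ in this construction; in the paper it arises from the simple pole of $A_2$ at $w=0$ meeting the index-correction factor $e^{\pi zw/v}$, via $\lim_{w\to 0}w\,[\partial_z^{\ell-1}A_2(w,z-\tau;\tau)]_{z=0}=-\frac{1}{2\pi i}$ when $\ell=1$.

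Second, the lowering identity. Applying $L$ term-by-term to the Rankin--Cohen bracket forces you to evaluate $L(D^m(s_\nu))$ for all $m$ (and $L$ does not commute with $D$ because of the $v^2$ in $L$), and then to resum $\sum_j c_j D^j(\vartheta_\nu)\cdot L(D^{\frac k2-1-j}(s_\nu))$ into the single almost holomorphic form $\vartheta_{k-1,\nu}$ with its Gaussian $e^{\pi z^2/(4v)}$. This resummation is the actual content of \eqref{lower}, not bookkeeping. The paper sidesteps it by applying $L$ \emph{before} converting $z$-derivatives into $\tau$-derivatives: in the representation $\frac{i}{2(2\pi i)^{k-1}}[\partial_z^{k-1}\sum_\nu\vartheta_\nu(z;\tau)S_\nu(z;\tau)]_{z=0}$ the operator $L$ hits only $S_\nu(z;\tau)$, producing $2iv^{3/2}e^{-\pi y^2/v}$ times a conjugated theta series, and the factor $e^{-\pi y^2/v}$ recombines with $\vartheta_\nu(z;\tau)$ (using evenness in $z$) to yield exactly $\sqrt v\,\overline{\theta_j(2\tau)}\,\vartheta_{k-1,\nu}(\tau)$. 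I would recommend restructuring your argument around the two-variable completion $\widehat{A}_2$ and deferring the passage to Rankin--Cohen brackets until after both the transformation law and the image under $L$ have been established.
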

In particular, setting $k=2$, we recover the result by Mellit and Okada, slightly rewritten.

\begin{corollary}\label{Mellit}
We have that $\widehat{\mathbb J}_2$ transforms of weight $2$ and
\[
L\left(\widehat{\mathbb J}_2(\tau)\right)=-\frac1{4\pi}+\frac1{16\pi v}
\left(\overline{\theta_1(2\tau)}\theta_1(2\tau)+\overline{\theta_3(2\tau)}\theta_3(2\tau)\right).
\]
\end{corollary}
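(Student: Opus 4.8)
The plan is to obtain Corollary \ref{Mellit} as the single special case $k=2$ of Theorem \ref{joycecomple}, so that essentially no new analysis is needed beyond unwinding notation. The weight statement is immediate: putting $k=2$ in the transformation law of Theorem \ref{joycecomple} gives $\widehat{\mathbb J}_2\left(\frac{a\tau+b}{c\tau+d}\right)=(c\tau+d)^2\widehat{\mathbb J}_2(\tau)$, i.e.\ weight $2$ with trivial multiplier, which is the first assertion.

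For the image under the lowering operator I would substitute $k=2$ into \eqref{lower}. The Kronecker term $-\delta_{k=2}/(8\pi)$ survives and produces the displayed constant; it is exactly $L$ applied to the completion summand $\delta_{k=2}/(8\pi v)$ in the definition of $\widehat{\mathbb J}_k$, since a one-line computation from $L=-2iv^2\,\partial_{\overline\tau}$ and $\partial_{\overline\tau}v=i/2$ gives $L(1/v)=-1$. The prefactor $\tfrac{i(k-1)}{8(2\pi i)^{k-1}}$ collapses at $k=2$ to a real constant, the powers of $i$ cancelling. It then remains to evaluate $\vartheta_{k-1,\nu}=\vartheta_{1,\nu}$: in the definition of $\vartheta_{\ell,\nu}$ the operator is $\partial_z^{\ell-1}$, so for $\ell=1$ it is the zeroth derivative, i.e.\ evaluation at $z=0$; the factors $e^{\pi i\nu z}$ and $e^{\pi z^2/(4v)}$ both become $1$, leaving $\vartheta_{1,\nu}(\tau)=q^{\nu^2/4}\vartheta\!\left(\nu\tau+\tfrac12;2\tau\right)=\vartheta_\nu(\tau)$ in the notation of \eqref{frn}. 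By \eqref{rewritetheta} this equals $-\theta_1(2\tau)$ for $\nu=-1$ and $-\theta_3(2\tau)$ for $\nu=0$. Substituting these, the Hermitian products $\overline{\theta_1(2\tau)}\,\vartheta_{1,-1}$ and $\overline{\theta_3(2\tau)}\,\vartheta_{1,0}$ turn into $-|\theta_1(2\tau)|^2$ and $-|\theta_3(2\tau)|^2$, and the leading minus sign in \eqref{lower} converts these into the positive combination $|\theta_1(2\tau)|^2+|\theta_3(2\tau)|^2$; assembling the numerical factors then gives the stated formula.

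I expect the only genuinely delicate point to be the bookkeeping of the constants — the interplay of the $2$, $\pi$ and $i$ coming from $(2\pi i)^{k-1}$, from $L=-2iv^2\,\partial_{\overline\tau}$, and from the coefficient in the definition of $\widehat{\mathbb J}_k$ — together with matching the outcome to the precise normalization of Mellit and Okada, which is the sense in which Corollary \ref{Mellit} is their result ``slightly rewritten''. As an independent cross-check I would recompute the shadow directly from \eqref{frn}: since $s_\nu$ is holomorphic in $\tau$ apart from the incomplete Gamma factor, $\partial_{\overline\tau}$ meets only $\Gamma\!\left(-\tfrac12,4\pi m^2 v\right)$ through $\partial_v$, and after multiplication by $-2iv^2$ the resulting Gaussian collapses to a multiple of the conjugate unary theta series attached to the relevant coset, reproducing $\overline{\theta_1(2\tau)}$ and $\overline{\theta_3(2\tau)}$ with the correct sign and homogeneity, which pins down the constants independently of Theorem \ref{joycecomple}.
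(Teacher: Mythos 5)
Your proposal is correct and follows exactly the paper's own (one-line) proof: specialize Theorem \ref{joycecomple} to $k=2$, observe that $\vartheta_{1,\nu}(\tau)=\vartheta_\nu(\tau)$ since the $z$-derivative of order $\ell-1=0$ is just evaluation at $z=0$, and convert via \eqref{rewritetheta}. One caveat worth recording, which concerns the paper's normalization rather than your argument: the literal substitution of $k=2$ into \eqref{lower} produces $-\tfrac{1}{8\pi}+\tfrac{\sqrt{v}}{16\pi}\left(|\theta_1(2\tau)|^2+|\theta_3(2\tau)|^2\right)$, whereas Corollary \ref{Mellit} prints $-\tfrac{1}{4\pi}$ and $\tfrac{1}{16\pi v}$, so the constant and the power of $v$ in the stated corollary do not match what the theorem actually yields (weight considerations, and your own cross-check via $L(1/v)=-1$, support the $-\tfrac{1}{8\pi}$ and $\sqrt{v}$ version).
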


The paper is organized as follows. In Section \ref{sec:prelim} we recall basis facts about Rankin-Cohen brackets, Jacobi forms, harmonic Maass forms, and Appell functions. In Section \ref{IdeaOfApproach}, we describe modularity properties of (general) non-holomorphic Jacobi forms. We then carry on this approach in Section \ref{4} for rank moments and in Section \ref{sec:joyce} for Joyce invariants. For the readers convinience, we keep Sections \ref{4} and \ref{sec:joyce} independent of Section \ref{IdeaOfApproach}.

\section*{Acknowledgements}
The author thanks Karl Mahlburg for many enlightening discussions.
\section{Preliminaries}\label{sec:prelim}
\subsection{Rankin-Cohen brackets}\label{2.1}
For $f_1,f_2$  modular forms of weight $k_1,k_2\in\frac12\Z$, respectively, define for $\nu\in\N_0$ the \textit{$\nu$th Rankin-Cohen bracket}
\begin{equation}\label{RC}
[f_1,f_2]_\nu:=\sum_{j=0}^\nu (-1)^j \binom{k_1+\nu-1}{\nu-j} \binom{k_2+\nu-1}{j} D^j(f_1)D^{\nu-j}(f_2)
\end{equation}
with $D:=\frac1{2\pi i}\frac{\partial}{\partial \tau}$. We then have that $[f_1,f_2]_\nu$ is a modular form of weight $k_1+k_2+2\nu$. In this paper we also allow $f_2$ to be the non-holomorphic part of a harmonic Maass form (see Subsection \ref{2.3} for the definition).
\subsection{Classical Jacobi forms and quasimodular forms}\label{2.2}
We first recall the definition of holomorphic Jacobi forms, following Eichler and Zagier \cite{EZ}.
\begin{definition} A {\it holomorphic Jacobi form of weight $k$ and index $m$} ($k, m \in \mathbb N$) on a subgroup $\Gamma \subseteq \textnormal{SL}_2(\mathbb Z)$ of finite index is a holomorphic function
	$\varphi:\mathbb C \times \mathbb H \to
	\mathbb C$ which, for all $\gamma = \sm{a}{b}{c}{d} \in \Gamma$ and $\lambda,\mu \in \mathbb Z$, satisfies
	\begin{enumerate}\item
		$\varphi\left(\frac{z}{c\tau + d};\frac{a\tau+b}{c\tau+d}\right) = (c\tau + d)^k e^{\frac{2\pi i m c z^2}{c\tau + d}} \varphi(z;\tau)$,
		\item $\varphi(z + \lambda \tau + \mu;\tau) = e^{-2\pi i m (\lambda^2\tau + 2\lambda z)} \varphi(z;\tau)$,
		\item $\varphi$ has a Fourier development of the form $\sum_{n, r} c(n,r)q^n e^{2\pi i r z}$ with \\
		$c(n,r)=0$ unless $n\geq r^2/4m$.
	\end{enumerate}
	\end{definition}
\noindent Denote by $J_{k,m}$ the space of holomorphic Jacobi forms of weight $k$ and index $m$.
	Jacobi forms with multipliers and of half-integral weight are defined similarly with obvious modifications made.
	
	A special Jacobi form used in this paper is
	\begin{equation}
	\vartheta(z)= \vartheta(z;\tau) :=\sum_{n\in\frac12+\Z}e^{\pi i n^2\tau+2\pi in\left(z+\frac12\right)}, \label{thedefn}
	\end{equation}
	where  here and throughout, we may omit the dependence of various functions on the variable $\tau$ if
	the context is clear.
	This function  is well-known to satisfy the following transformation law \cite[(80.31) and (80.8)]{Rad}.
	\begin{proposition}\label{THETtrans}
		For $\lambda,\mu \in \mathbb Z$ and
		$\gamma=\sm{a}{b}{c}{d} \in \textnormal{SL}_2(\mathbb Z)$, we have that
		\begin{align}
		\vartheta(z+\lambda \tau+\mu)&=(-1)^{\lambda+\mu}q^{-\frac{\lambda^2}{2}}e^{-2\pi i\lambda
			z}\vartheta(z),\label{tt1}\\ \vartheta\left(\frac{z}{c\tau+d};
		\frac{a\tau+b}{c\tau+d}\right)&=\psi^3\left(\gamma\right) (c\tau+d)^{\frac12}e^{\frac{\pi icz^2}{c \tau+d}}\vartheta(z;
		\tau)\label{tt2},
		\end{align}
		with $\psi$ is the multiplier of Dedekind's $\eta$-function, i.e.,
		\begin{equation}\label{multeta}
		\eta\left(\frac{a\tau+b}{c\tau+d}\right)=\psi(\gamma)(c\tau+d)^\frac12\eta(\tau).
		\end{equation}
		
	\end{proposition}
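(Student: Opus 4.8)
The plan is to establish the two laws separately: the elliptic law \eqref{tt1} by a direct manipulation of the defining series \eqref{thedefn}, and the modular law \eqref{tt2} by reducing to the generators $S=\sm{0}{-1}{1}{0}$ and $T=\sm{1}{1}{0}{1}$ of $\SL_2(\Z)$. For \eqref{tt1} I would substitute $z\mapsto z+\lambda\tau+\mu$ into \eqref{thedefn}. The $\mu$-shift produces the factor $e^{2\pi in\mu}=(-1)^{\mu}$, uniform in $n$ since $n\in\frac12+\Z$ and $\mu\in\Z$. For the $\lambda\tau$-shift I would complete the square via $n^2+2\lambda n=(n+\lambda)^2-\lambda^2$ and reindex $n\mapsto n-\lambda$, an operation preserving $\frac12+\Z$; collecting the prefactors then gives $q^{-\lambda^2/2}e^{-2\pi i\lambda z}(-1)^{\lambda}$, as claimed. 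This step is purely formal.

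For \eqref{tt2} I would first observe that it suffices to check the identity on $S$ and $T$: the proposed automorphy factor $\psi^3(\gamma)(c\tau+d)^{1/2}e^{\pi icz^2/(c\tau+d)}$ satisfies the relevant cocycle relation under composition, and verifying the compatibility of the half-integral square-root branch is part of the argument (it is the same cocycle controlling the $\eta$-multiplier). The $T$-transformation is then immediate from \eqref{thedefn}: writing $n=k+\frac12$ with $k\in\Z$ one has $e^{\pi in^2}=e^{\pi i/4}$, since $k^2+k$ is even, so $\vartheta(z;\tau+1)=e^{\pi i/4}\vartheta(z;\tau)$, matching $\psi^3(T)=e^{\pi i/4}$ and the trivial factors at $c=0$.

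The crux is the $S$-transformation. Here I would apply Poisson summation to the sum over the shifted lattice $\frac12+\Z$. Evaluating the Fourier transform of the relevant Gaussian by the standard Gaussian integral yields the weight factor together with the exponential, giving
\begin{equation*}
\vartheta\!\left(\tfrac{z}{\tau};-\tfrac1\tau\right)=c_S\,\tau^{1/2}e^{\pi iz^2/\tau}\vartheta(z;\tau)
\end{equation*}
for some eighth root of unity $c_S$. To determine $c_S$---the genuinely delicate point, since it hinges on the branch of $\sqrt{-i\tau}$---I would differentiate in $z$ at $z=0$. As $\vartheta$ is odd in $z$ we have $\vartheta(0;\tau)=0$, so both sides reduce to $\vartheta_z(0;\cdot)$; the classical identity $\vartheta_z(0;\tau)=C\,\eta(\tau)^3$ (a consequence of the Jacobi triple product, independent of the modular transformation itself) together with $\eta(-1/\tau)=\sqrt{-i\tau}\,\eta(\tau)$ then forces $c_S=(-i)^{3/2}=e^{-3\pi i/4}=\psi^3(S)$.

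The main obstacle is exactly this determination of $c_S$, and equivalently the verification that the half-integral automorphy factor defines an honest cocycle on $\SL_2(\Z)$; all remaining steps are either a direct series computation or a routine Gaussian Poisson summation. With $S$ and $T$ confirmed with matching multipliers, the cocycle property propagates \eqref{tt2} to all of $\SL_2(\Z)$, completing the proof.
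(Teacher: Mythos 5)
Your proof is correct, but note that the paper does not prove this proposition at all: it is quoted from Rademacher \cite{Rad} (his (80.31) and (80.8)), so there is no in-paper argument to compare against. Your route is the classical one and all the steps check out: the elliptic law \eqref{tt1} is indeed a formal completion of the square plus the reindexing $n\mapsto n-\lambda$ on the shifted lattice $\frac12+\Z$ (which also produces the extra $(-1)^{\lambda}$ from the $2\pi i n\cdot\frac12$ term); the $T$-law gives $e^{\pi i/4}=\psi^3(T)$ as you say; and the $S$-law follows from Poisson summation up to an eighth root of unity, which your normalization via $\vartheta_z(0;\tau)=-2\pi\,\eta(\tau)^3$ (a consequence of the triple product displayed in the paper) and $\eta(-1/\tau)=\sqrt{-i\tau}\,\eta(\tau)$ pins down to $e^{-3\pi i/4}=\psi^3(S)$. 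One remark that streamlines the point you flag as the main obstacle: since $\psi$ is by definition the $\eta$-multiplier, you can write
\begin{equation*}
\psi^3(\gamma)\,(c\tau+d)^{\frac12}=\frac{\eta^3(\gamma\tau)}{\eta^3(\tau)}\,(c\tau+d)^{-1},
\end{equation*}
which is manifestly a cocycle with a consistent choice of square root, and the index-$\frac12$ factor $e^{\pi i c z^2/(c\tau+d)}$ together with $z\mapsto z/(c\tau+d)$ is the standard Jacobi cocycle; so the reduction to the generators $S$ and $T$ requires no separate branch bookkeeping. With that observation your argument is complete.
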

	
	The Jacobi theta function is also known to satisfy the well known triple product identity
	\begin{align*}
	\vartheta(z;\tau) = -i q^{\frac{1}{8}} \zeta^{-\frac{1}{2}} \prod_{n=1}^\infty (1-q^n)\left(1-\zeta q^{n-1}\right) \left(1-\zeta^{-1}q^n\right).
	\end{align*}	

The Taylor coefficients in $z$ of Jacobi forms are quasimodular forms which we next recall.
These are holomorphic parts of \emph{almost holmorphic modular forms}, which as originally defined by Kaneko-Zagier \cite{KZ},  transform like usual modular forms,
but are polynomials in $1/v$ with holomorphic coefficients. The holomorphic parts are in particular the constant terms of these polynomials. In this paper we use a slightly modified definition allowing weakly holomorphic coefficients. We let $\widehat{M}_k(\Gamma,\chi)$ denote the space of almost holomorphic modular forms of weight $k$ for $\Gamma\subset\SL_2(\Z)$ and multiplier $\chi$.
Standard examples of almost holomorphic modular
form include derivatives of holomorphic modular forms (corrected so it transforms modular), as well as the non-holomorphic Eisenstein series $\widehat{E}_2$, defined by
\begin{align*}
\widehat{E}_2(\tau) := E_2(\tau) - \frac{3}{\pi v}.
\end{align*}
Here its holomorphic part is given by
\begin{align}\label{E2holdef}E_2(\tau) := 1-24\sum_{n= 1}^\infty \sigma_1(n) q^n,
\end{align}
where $\sigma_1(n)$ is the sum of positive integer divisors of $n$.
The function $E_2$ satisfies, for $\left(\begin{smallmatrix}
a & b \\ c & d
\end{smallmatrix}\right)\in\SL_2(\Z)$,
\begin{equation}\label{E2trans}
E_2\left(\frac{a\tau+b}{c\tau+d}\right)=(c\tau+d)^2E_2(\tau)-\frac{6ic}{\pi}(c\tau+d).
\end{equation}

As mentioned above, Taylor coefficients of Jacobi forms are quasimodular forms. To be more precisely, for
$\phi \in J_{k,m}$ we write its Taylor expansion as
$$
\phi(z;\tau) =: \sum_{n = 0}^\infty \chi_n(\tau) z^n.
$$
\begin{proposition} \label{almostcomplete}
The function
	$$
	\psi_n(\tau):= \sum_{0 \leq j \leq \frac{n}{2}}\frac{\left(\frac{\pi m}{v}\right)^j}{j!}\chi_{n-2j}(\tau)
	$$
	is an almost holomorphic modular form of weight $k+n$.
\end{proposition}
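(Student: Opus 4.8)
The plan is to remove the obstructing Gaussian factor $e^{2\pi i m c z^2/(c\tau+d)}$ from the Jacobi transformation by multiplying $\varphi$ with a non-holomorphic Gaussian in $z$, chosen so that the modified function transforms with a clean automorphy factor and has the $\psi_n$ as its Taylor coefficients. Concretely, I would introduce the completion
\[
\widehat{\varphi}(z;\tau):=e^{\frac{\pi m z^2}{v}}\,\varphi(z;\tau)
\]
and first show that, for every $\sm{a}{b}{c}{d}\in\Gamma$, it obeys the homogeneous law
\[
\widehat{\varphi}\!\left(\tfrac{z}{c\tau+d};\tfrac{a\tau+b}{c\tau+d}\right)=(c\tau+d)^k\,\widehat{\varphi}(z;\tau).
\]

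The verification of this law is the heart of the argument. Writing $\tau'=(a\tau+b)/(c\tau+d)$ and $v'=\im(\tau')$, and substituting property (1) of the definition of a Jacobi form together with $e^{\frac{\pi m}{v'}\frac{z^2}{(c\tau+d)^2}}$ coming from the Gaussian, the claim reduces to the single scalar identity on the exponents
\[
\frac{1}{(c\tau+d)^2}\cdot\frac{\pi m}{v'}+\frac{2\pi i m c}{c\tau+d}=\frac{\pi m}{v}.
\]
Using $v'=v/|c\tau+d|^2$ this follows at once from the elementary relation $(c\overline{\tau}+d)+2icv=c\tau+d$, after factoring out $\tfrac{\pi m}{v(c\tau+d)}$. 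This short cocycle computation is essentially the only real calculation in the proof, and I would cross-check it against \eqref{E2trans}: the exponent $\pi m/v$ and the purely holomorphic choice $\tfrac{\pi^2 m}{3}E_2$ satisfy the \emph{same} inhomogeneous cocycle (they differ by $\tfrac{\pi^2 m}{3}\widehat{E}_2$, a genuine weight-$2$ form), the former producing the almost holomorphic completion wanted here and the latter the purely quasimodular normalization.

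Next I would expand both sides of the homogeneous law as power series in $z$ near $z=0$, which is legitimate since $\varphi$ is holomorphic in $z$ and the two series converge absolutely there. Multiplying $\sum_{j\ge 0}\tfrac{(\pi m/v)^j}{j!}z^{2j}$ against $\varphi(z;\tau)=\sum_{r\ge 0}\chi_r(\tau)z^r$ and collecting the coefficient of $z^n$ gives exactly $\widehat{\varphi}(z;\tau)=\sum_{n\ge 0}\psi_n(\tau)z^n$, so $\psi_n$ is precisely the $n$-th Taylor coefficient of $\widehat{\varphi}$. Since replacing $z$ by $z/(c\tau+d)$ rescales the $n$-th coefficient by $(c\tau+d)^{-n}$, comparing the coefficients of $z^n$ in the transformation law yields
\[
\psi_n(\tau')=(c\tau+d)^{k+n}\psi_n(\tau),
\]
i.e.\ the weight $k+n$ transformation (carrying the multiplier of $\varphi$ if $\Gamma$ has one).

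Finally, directly from its defining formula $\psi_n$ is a polynomial in $1/v$ whose coefficients $\chi_{n-2j}$ are holomorphic, being Taylor coefficients in $z$ of the holomorphic function $\varphi$ (weakly holomorphic is permitted, to accommodate the $\eta$-type denominators in the applications). Together with the transformation law just established, this is exactly the assertion that $\psi_n\in\widehat{M}_{k+n}(\Gamma,\chi)$. I expect the only subtle point to be the cocycle identity for the Gaussian exponent; once that is in place the rest is a formal comparison of power-series coefficients.
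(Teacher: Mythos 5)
Your proof is correct and follows essentially the same route the paper takes: the paper does not prove Proposition \ref{almostcomplete} in Section \ref{2.2} itself, but its proof of the non-holomorphic generalization (Proposition \ref{nonholP}) is exactly your argument --- multiply by $e^{\pi m z^2/v}$ to obtain a function transforming as an element of $J_{k,0}$, then compare Taylor coefficients in $z$. Your explicit verification of the exponent identity $\frac{\pi m}{v'(c\tau+d)^2}+\frac{2\pi i m c}{c\tau+d}=\frac{\pi m}{v}$ via $(c\overline{\tau}+d)+2icv=c\tau+d$ is a correct filling-in of the step the paper only asserts.
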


Alternatively one can find a ``modular completion'' using $E_2$.
\begin{proposition}
	We have that
	$$
	\rho_n(\tau):= \sum_{0 \leq j \leq \frac{n}{2}}\frac{\left(\frac{\pi^2 m}{3} E_2(\tau)\right)^j}{j!}\chi_{n-2j}(\tau)
	$$
	is a modular form of weight $k+n$.
\end{proposition}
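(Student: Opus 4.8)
The plan is to package the whole family $\{\rho_n\}$ into a single generating series in an auxiliary variable $z$, and to read off the transformation behaviour of the $\rho_n$ from that of $\phi$ together with the anomaly \eqref{E2trans} of $E_2$. First I would observe that, writing $A:=\frac{\pi^2 m}{3}E_2(\tau)$ and expanding $e^{Az^2}=\sum_{j\ge0}\frac{A^j}{j!}z^{2j}$, the coefficient of $z^n$ in $e^{Az^2}\phi(z;\tau)$ is exactly $\sum_{0\le j\le n/2}\frac{A^j}{j!}\chi_{n-2j}(\tau)=\rho_n(\tau)$. Thus the family is encoded by
\[
G(z;\tau):=e^{\frac{\pi^2 m}{3}E_2(\tau)z^2}\phi(z;\tau)=\sum_{n\ge 0}\rho_n(\tau)z^n,
\]
a holomorphic function of $(z,\tau)$ since $\phi$ and $E_2$ are holomorphic.

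The key step is to compute how $G$ behaves under $\g=\sm{a}{b}{c}{d}\in\G$ acting by $z\mapsto\tfrac{z}{c\tau+d}$, $\tau\mapsto\tfrac{a\tau+b}{c\tau+d}$. Part (1) of the definition of a Jacobi form gives
\[
\phi\left(\tfrac{z}{c\tau+d};\tfrac{a\tau+b}{c\tau+d}\right)=(c\tau+d)^k e^{\frac{2\pi i m c z^2}{c\tau+d}}\phi(z;\tau),
\]
while \eqref{E2trans} yields
\[
\frac{\pi^2 m}{3}E_2\left(\tfrac{a\tau+b}{c\tau+d}\right)\frac{z^2}{(c\tau+d)^2}=\frac{\pi^2 m}{3}E_2(\tau)z^2-\frac{2\pi i m c z^2}{c\tau+d},
\]
where the precise constant $\frac{\pi^2 m}{3}$ is exactly what converts the $E_2$-anomaly $-\frac{6ic}{\pi}(c\tau+d)$ into the term $-\frac{2\pi i m c z^2}{c\tau+d}$. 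Multiplying the two displayed identities, the newly produced exponential $e^{-\frac{2\pi i m c z^2}{c\tau+d}}$ cancels the index factor $e^{\frac{2\pi i m c z^2}{c\tau+d}}$ coming from $\phi$, leaving the clean law
\[
G\left(\tfrac{z}{c\tau+d};\tfrac{a\tau+b}{c\tau+d}\right)=(c\tau+d)^k G(z;\tau)
\]
with no residual exponential.

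Finally I would extract the coefficient of $z^n$ from both sides. On the left, $G(\tfrac{z}{c\tau+d};\g\tau)=\sum_n\rho_n(\g\tau)(c\tau+d)^{-n}z^n$, and on the right $(c\tau+d)^k\sum_n\rho_n(\tau)z^n$; comparing coefficients gives $\rho_n(\g\tau)=(c\tau+d)^{k+n}\rho_n(\tau)$, the asserted weight $k+n$ (carrying the multiplier of $\phi$, with the obvious modifications in the half-integral case). Holomorphicity of $\rho_n$ in $\tau$, together with the correct growth at the cusps, is inherited from $\phi$—whose Fourier development is supported on $n\ge r^2/4m$—and from $E_2$, so $\rho_n$ is a genuine modular form rather than merely an almost holomorphic one as in Proposition \ref{almostcomplete}. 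The only point requiring real care is this exact cancellation of anomalous terms: it is precisely the normalisation $\frac{\pi^2 m}{3}$, tied to $\frac{\pi m}{v}$ of Proposition \ref{almostcomplete} through $\widehat{E}_2=E_2-\frac{3}{\pi v}$, that is forced by the computation, and it is the crux of why $E_2$ may replace $1/v$.
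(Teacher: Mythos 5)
Your proof is correct and uses essentially the same mechanism as the paper: form the generating series $e^{\frac{\pi^2 m}{3}E_2(\tau)z^2}\phi(z;\tau)$ and use \eqref{E2trans} so that the $E_2$-anomaly exactly cancels the index factor $e^{\frac{2\pi i m c z^2}{c\tau+d}}$, then compare Taylor coefficients in $z$. The only cosmetic difference is that the paper (in the proof of Proposition \ref{3.2}) routes the cancellation through the $1/v$-completion $\phi^*=e^{\frac{\pi m z^2}{v}}\phi$ of Proposition \ref{nonholP}, observing that $e^{\frac{\pi m z^2}{v}}e^{-\frac{\pi^2 m E_2(\tau)}{3}z^2}=e^{-\frac{\pi^2 m}{3}\widehat{E}_2(\tau)z^2}$ transforms like a Jacobi form of weight and index $0$, whereas you verify the same cancellation directly --- a point you yourself note via $\widehat{E}_2=E_2-\frac{3}{\pi v}$.
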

\begin{remark}
	There are further ways of completing Taylor coefficients of Jacobi forms to modularity objects, namely using derivatives of previous coefficients or Rankin-Cohen brackets.
\end{remark}
\subsection{Harmonic Maass forms}\label{2.3}
We next recall non-holomorphic generalizations of modular forms, following Bruinier-Funke \cite{BF}.
\begin{definition}
For $k\in\frac12\Z$, a {\it weight $k$ harmonic Maass form} on a congruence subgroup $\Gamma\subset \mathrm{SL}_2(\Z)$ is any smooth function $f:\H\to\C$ satisfying the following properties:\\
\noindent
(1)\
For all $\left(\begin{smallmatrix} a&b\\c&d\end{smallmatrix}\right)\in\Gamma$
\[
f\left(\frac{a\tau+b}{c\tau+d}\right)=
\begin{cases}
(c\tau+d)^k f(\tau)&\quad\text{ if } k\in\Z,\\
\left(\frac{c}{d}\right) \varepsilon_d^{-2k}(c\tau+d)^kf(\tau)&\quad\text{ if } k\in\frac12+\Z.
\end{cases}
\]
Here $(\frac{c}{d})$ is the extended Legendre symbol and $\varepsilon_d$ is defined for odd integers $d$ as
\[
\varepsilon_d:=
\begin{cases}
1&\quad\text{ if } d\equiv 1\pmod{4},\\
i&\quad\text{ if } d\equiv 3\pmod{4}.
\end{cases}
\]
\noindent
(2)\
We have
\[
\Delta_k(f)=0,
\]
with the {\it weight $k$ hyperbolic Laplace operator}
\[
\Delta_k:=-v^2\left(\frac{\partial^2}{\partial u^2}+\frac{\partial^2}{\partial v^2}\right)+ikv\left(\frac{\partial}{\partial u}+i\frac{\partial}{\partial v}\right).
\]
\noindent
(3)\
There exists a polynomial $P_f(\tau)\in\C[q^{-1}]$ such that
\[
f(\tau)-P_f(\tau)=O\left(e^{-\varepsilon v}\right)
\]
as $v\to\infty$ for some $\varepsilon>0$. Analoguous conditions hold at all cusps of $\Gamma$.

We let $H_k(\Gamma)$ denote the space of harmonic Maass forms of weight $k$ for $\Gamma$. Again one can modify the above definition to include multipliers.

Harmonic Maass forms are in many ways related to classical (weakly holomorphic) modular forms, i.e., those meromorphic modular forms whose only poles may lie at the cusps of $\Gamma$. To describe the connections, define
\[
\xi_k:=2iv^k\frac{\overline{\partial}}{\partial\overline{\tau}}.
\]
Note that
\[
\xi_k=v^{k-2}\overline{L}.
\]
We have, with $S_\kappa(\Gamma) $ denoting the space of weight $\kappa$ for $\Gamma$
\[
\xi_k: H_k(\Gamma)\to S_{2-k}(\Gamma).
\]
If $k\leq\frac32$, we have for $f\in H_k$
$$
f(\tau)=\sum_{n\in \Q\atop{n\gg -\infty}}c_f^+(n)q^n+\sum_{n\in \Q\atop{n<0}}c_f^-(n)\Gamma(1-k,-4\pi nv)q^n.
$$
\end{definition}
\subsection{Appell function}
Let
\begin{equation}\label{defineAl}
A_\ell\left(z_1,z_2\right)=A_\ell\left(z_1, z_2; \tau\right):=e^{\pi i\ell z_1}\sum_{n\in\Z}
\frac{(-1)^{\ell n}e^{2\pi inz_2} q^{\frac{\ell n(n+1)}{2}}}{1-e^{2\pi iz_1}q^n}.
\end{equation}
We next recall the completion $\widehat{A}_\ell$ from \cite{Zw}, namely
\begin{multline*}
\widehat{A}_\ell\left(z_1, z_2; \tau\right):= A_\ell\left(z_1, z_2; \tau\right)\\
+\frac{i}{2}\sum_{\nu=0}^{\ell-1} e^{2\pi i\nu z_1}
\vartheta\left(z_2+\nu\tau+\frac{\ell-1}{2}; \ell\tau\right)S\left(\ell z_1-z_2-\nu\tau-\frac{\ell-1}{2}; \ell\tau\right)
\end{multline*}
with $S$ and $\vartheta$ defined in \eqref{S} and \eqref{thedefn}, respectively.

The function $\widehat{A}_\ell$ transforms as a multivariable Jacobi form.

\begin{theorem}
\label{2.4}
We have, for $n_1,n_2,m_1,m_2\in\Z$
\begin{multline}\label{Appellt}
\widehat{A}_{\ell} \left( z_1 +n_1\tau+m_1,  z_2 +n_2\tau+m_2 \right) \\=(-1)^{\ell\left( n_1 + m_1\right)} e^{2\pi i z_1 \left( \ell n_1 -n_2 \right)} e^{-2\pi i n_1 z_2} q^{\frac{\ell n_1^2}2 - n_1 n_2} \widehat{A}_\ell \left(z_1, z_2 \right).
\end{multline}
Moreover, for $\left(\begin{smallmatrix}a&b\\c&d\end{smallmatrix}\right)\in\SL_2(\Z)$,
\begin{equation}\label{Amod}
\widehat{A}_\ell \left( \frac{z_1}{c \tau +d}, \frac{z_2}{c\tau +d}; \frac{a\tau+b}{c\tau+d} \right) = (c\tau+d) e^{\frac{\pi i c\left( -\ell z_1^2 +2z_1z_2\right)}{c\tau+d}} \widehat{A}_\ell \left(z_1, z_2; \tau\right).
\end{equation}
\end{theorem}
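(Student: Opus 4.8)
The plan is to treat the two assertions separately: I would first establish the elliptic (translation) law \eqref{Appellt} by direct manipulation of the defining series, and then reduce the modular law \eqref{Amod} to the two standard generators of $\SL_2(\Z)$. The organizing principle throughout is to split $\widehat A_\ell$ into its meromorphic part $A_\ell$ and the non-holomorphic completion term, and to show that the transformation anomaly of $A_\ell$ is in each case cancelled exactly by the anomaly of the completion.

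For \eqref{Appellt} I would argue as follows. Under the integral shifts $z_2 \mapsto z_2 + m_2$ and $z_1 \mapsto z_1 + m_1$ the factor $e^{2\pi i n z_2}$ and the pole term $(1 - e^{2\pi i z_1}q^n)^{-1}$ in the series \eqref{defineAl} respond elementarily, while the shifts by $\tau$ are handled by reindexing $n \mapsto n \mp 1$; the prefactor $e^{\pi i \ell z_1}$ together with the Gaussian weight $q^{\ell n(n+1)/2}$ then produces precisely the claimed automorphy factor $(-1)^{\ell(n_1+m_1)} e^{2\pi i z_1(\ell n_1 - n_2)} e^{-2\pi i n_1 z_2} q^{\ell n_1^2/2 - n_1 n_2}$. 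For the completion term one invokes the elliptic law \eqref{tt1} of $\vartheta$ together with the translation behaviour of $S(\,\cdot\,;\ell\tau)$ in its first argument; since both building blocks are evaluated at matched, shifted arguments (with the $\nu$-sum permuted cyclically under the $\tau$-shifts), the two contributions recombine with the same multiplier. Matching the signs and $q$-powers of the meromorphic and completion pieces is then routine bookkeeping.

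For \eqref{Amod} I would verify the law only on $T = \sm{1}{1}{0}{1}$ and $S_0 = \sm{0}{-1}{1}{0}$, the general case following by the cocycle relation. The transformation under $T$ is immediate, since $\tau \mapsto \tau + 1$ fixes $z_1, z_2$ and acts on each $q$-power by a root of unity, the induced shift $\ell\tau \mapsto \ell\tau + \ell$ inside $\vartheta$ and $S$ being absorbed by periodicity. The substantive content is the $S_0$-transformation $\tau \mapsto -1/\tau$, $z_i \mapsto z_i/\tau$. Here the completion pieces are controlled by Proposition \ref{THETtrans} (for $\vartheta$ at modulus $\ell\tau$) and by the modular transformation of $S$, which is obtained from the definition \eqref{S} by applying Poisson summation to the Gaussian-times-error-function summand; this yields the weight $\tfrac12$ automorphy factor and a Mordell-type integral.

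The main obstacle is precisely the $S_0$-transformation of the meromorphic sum $A_\ell$, which on its own does \emph{not} transform as a Jacobi form. I would make the defect explicit by rewriting the $S_0$-transform of $A_\ell$ as a Mordell integral (splitting the geometric series and shifting a contour, equivalently Poisson summation), and then show that this integral decomposes into a finite sum of products $\vartheta(\,\cdot\,;\ell\tau)\,S(\,\cdot\,;\ell\tau)$ matching, with opposite sign, the $S_0$-transform of the completion term. A cleaner way to organize the endgame is a uniqueness argument: letting $F$ denote the difference of the two sides of \eqref{Amod}, one checks that $F$ obeys the same elliptic law \eqref{Appellt} and that, because the $\bar\tau$-derivative of $\widehat A_\ell$ is (up to an explicit power of $v$) a product of theta functions transforming correctly under \eqref{tt2}, one has $\partial_{\bar\tau} F = 0$. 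Thus $F$ is a meromorphic Jacobi form; forcing its poles along $z_1 \in \Z\tau + \Z$ to cancel on both sides (via the residues of $A_\ell$) shows $F$ is holomorphic, of a weight and index for which the only such form is $0$. Verifying that the poles and the shadow genuinely cancel is the delicate point where the precise shape of $\widehat A_\ell$ in the statement is essential.
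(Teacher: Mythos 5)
The paper does not actually prove Theorem \ref{2.4}: the completion $\widehat{A}_\ell$ and its transformation laws are simply recalled from Zwegers' preprint \cite{Zw}, so there is no in-paper argument to compare yours against. Judged on its own terms, your outline is the standard and correct strategy from that literature. The elliptic law \eqref{Appellt} does follow by reindexing the defining series \eqref{defineAl} and applying \eqref{tt1} together with the (quasi-)periodicity of $S(\cdot;\ell\tau)$ to the completion term, and the modular law reduces to $T$ and $S_0=\sm{0}{-1}{1}{0}$; your two suggested endgames for $S_0$ (explicit cancellation of Mordell integrals, or a Liouville-type uniqueness argument) are both the ones Zwegers uses for the level-one function $\mu$. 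Two points where your sketch is thinner than it should be. First, in the uniqueness argument you only assert $\partial_{\bar\tau}F=0$; since $S$ depends on $\bar z$ through $y=\operatorname{Im}(z)$, and the modular substitution $z_i\mapsto z_i/(c\tau+d)$ entangles $\bar z_i$ with $\bar\tau$, you must also verify $\partial_{\bar z_1}F=\partial_{\bar z_2}F=0$ before concluding that $F$ is meromorphic; the conclusion $F\equiv 0$ then follows because $F$ is elliptic of negative index $-\ell/2$ in $z_1$ (zero count $-\ell<0$), once the residues at $z_1\in\Z\tau+\Z$ are matched as you indicate. Second, for general $\ell$ the clean route --- and the reason the completion carries a sum over $0\le\nu\le\ell-1$ --- is the decomposition of $A_\ell(z_1,z_2;\tau)$ into $\ell$ shifted copies of $A_1(\ell z_1,\,z_2+\nu\tau+\frac{\ell-1}{2};\,\ell\tau)$, which reduces the whole theorem to the level-one case; attacking the $\ell$-fold series and the $\nu$-sum directly, as you propose, is possible but forces you to rediscover exactly this bookkeeping. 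Neither point is a fatal gap, but both need to be carried out for the argument to be complete.
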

\begin{remark}
	Plugging in torsion points $z_j\in\Z+\Z\tau$ yields linear combinations of harmonic Maass forms multiplied by modular forms.
\end{remark}

\section{Idea of approach}
\label{IdeaOfApproach}
We consider non-holomorphic functions $\phi : \C\times\H\rightarrow\C$ satisfying the same transformation as Jacobi forms and which have expansions of the form
$$
\phi(z;\tau) =: \sum_{n}\chi_n(\overline{z};\tau)z^n.
$$
Note that we also allow $\chi_n$ to be non-holomorphic in $\tau$. We now describe how to generalize the two approaches of quasimodularity of Taylor coefficients of holomorphic Jacobi forms described in Section \ref{2.2}. For simplicity we restrict to the full modular group and no multiplier.

We first determine non-holomorphic linear combinations of the Taylor coefficients which transform like modular forms.
\begin{proposition}\label{nonholP}
	We have that
	$$
	\psi_n(\tau) := \sum_{j \geq 0} \frac{\left(\frac{\pi m }{v}\right)^j}{j!}\chi_{n-2j}(0;\tau)
	$$
	transforms like a modular form of weight $k+n$.
\end{proposition}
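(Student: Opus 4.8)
The plan is to mimic the classical argument for Taylor coefficients of holomorphic Jacobi forms (Proposition \ref{almostcomplete}), being careful that here $\chi_n$ may depend on $\overline{z}$ and may be non-holomorphic in $\tau$. Since we only need the value at $z=0$, the $\overline{z}$-dependence is harmless once we set $\overline{z}=0$, so the essential content is the behavior of the modular transformation under Taylor expansion in $z$. First I would start from the assumed Jacobi transformation of $\phi$ under the generator $\sm{a}{b}{c}{d}$, namely $\phi(\frac{z}{c\tau+d};\frac{a\tau+b}{c\tau+d}) = (c\tau+d)^k e^{\frac{2\pi i m c z^2}{c\tau+d}}\phi(z;\tau)$, and expand both sides as power series in $z$. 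On the left I would write $\phi(\frac{z}{c\tau+d};\cdot) = \sum_n \chi_n(\cdot)\,(c\tau+d)^{-n} z^n$ (after setting the antiholomorphic variable to $0$), and on the right I would multiply the expansion $\sum_n \chi_n(0;\tau)z^n$ by the exponential factor expanded as $\sum_{j\geq 0}\frac{1}{j!}\bigl(\frac{2\pi i m c}{c\tau+d}\bigr)^j z^{2j}$.

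Next I would compare coefficients of $z^n$ on the two sides. This yields, writing $\gamma\tau := \frac{a\tau+b}{c\tau+d}$,
\begin{equation*}
\chi_n(0;\gamma\tau) = (c\tau+d)^{k+n}\sum_{0\leq j\leq \frac{n}{2}}\frac{1}{j!}\left(\frac{2\pi i m c}{c\tau+d}\right)^j \chi_{n-2j}(0;\tau).
\end{equation*}
The whole point of the weighting factor $(\pi m/v)^j/j!$ in the definition of $\psi_n$ is that it exactly absorbs these inhomogeneous lower-order terms. I would therefore substitute the above relation into the definition of $\psi_n(\gamma\tau)$, using that $v$ transforms as $\im(\gamma\tau) = v/|c\tau+d|^2$, so that $\frac{\pi m}{\im(\gamma\tau)} = \frac{\pi m |c\tau+d|^2}{v}$. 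The key step will be reorganizing the resulting double sum over indices $j$ (from the definition of $\psi_n$) and the auxiliary index coming from the transformation law: after collecting powers of $(c\tau+d)$, $(c\overline{\tau}+d)$ and $\frac{\pi m}{v}$, one should recover exactly $(c\tau+d)^{k+n}\psi_n(\tau)$, with all genuinely non-holomorphic terms cancelling.

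The main obstacle I anticipate is precisely this combinatorial bookkeeping: the factor $\frac{\pi m}{\im(\gamma\tau)}$ produces $|c\tau+d|^2 = (c\tau+d)(c\overline\tau+d)$, and the $(c\overline\tau+d)$ factors must cancel so that the final answer is holomorphic in the automorphy factor $(c\tau+d)^{k+n}$. I expect this to reduce to a binomial identity of the form
\begin{equation*}
\sum_{a+b=j}\frac{(-x)^a}{a!}\cdot\frac{x^b}{b!} = \frac{(x-x)^j}{j!}\cdot(\cdots),
\end{equation*}
or more precisely a vanishing coming from a binomial theorem that forces the spurious $(c\overline\tau+d)$-dependence to disappear. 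The cleanest way to organize the cancellation is to interchange the order of summation, set $r := j + (\text{inner index})$ as the new outer summation variable, and check that the inner sum telescopes to a single term. Once the automorphy factor is seen to be purely $(c\tau+d)^{k+n}$ with no remaining $v$-dependence beyond that repackaged inside $\psi_n$ itself, the proof is complete; it suffices to verify the identity on the two generators $T=\sm{1}{1}{0}{1}$ and $S=\sm{0}{-1}{1}{0}$ of $\SL_2(\Z)$, and the $T$-invariance is immediate from periodicity of the $\chi_n$.
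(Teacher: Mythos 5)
Your plan is correct, but it takes a genuinely different route from the paper's proof. The paper multiplies $\phi$ by $e^{\pi m z^2/v}$, observes that the product $\phi^*$ satisfies the transformation law of a Jacobi form of weight $k$ and index $0$ (so with no exponential automorphy factor at all), reads off directly that the $n$th Taylor coefficient satisfies $\psi_n^*\bigl(\overline z/\overline\tau;-1/\tau\bigr)=\tau^{k+n}\psi_n^*(\overline z;\tau)$, and then identifies $\psi_n^*(0;\tau)$ with $\psi_n(\tau)$ by expanding the exponential. You instead keep the index-$m$ factor, derive the inhomogeneous transformation of the individual $\chi_n(0;\cdot)$, and cancel the lower-order terms by hand. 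Your cancellation does go through: with $r=i+j$ as outer index, the inner sum equals $\frac{1}{r!}\bigl(\frac{\pi m(c\overline\tau+d)}{v(c\tau+d)}+\frac{2\pi i mc}{c\tau+d}\bigr)^{r}$ by the binomial theorem, and since $c\overline\tau+d+2icv=c\tau+d$ this collapses to $\frac{1}{r!}\bigl(\frac{\pi m}{v}\bigr)^{r}$. Note that the inner sum therefore does not vanish, as your displayed guess suggests, but collapses to exactly the coefficient defining $\psi_n(\tau)$; the mechanism is a binomial collapse that removes the $(c\overline\tau+d)$-dependence rather than a telescoping to zero. The paper's device buys a shorter argument with no double-sum bookkeeping, and it transplants verbatim to Proposition \ref{3.2} by inserting the $E_2$-factor; your computation is more elementary and makes the cancellation explicit. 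One small remark: your direct computation is valid for every $\gamma\in\SL_2(\Z)$ simultaneously, so the final reduction to the generators $S$ and $T$ is unnecessary (the paper itself only writes out the case of $S$, implicitly using the same reduction).
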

\begin{proof}
Define
$$
\phi^*(z;\tau):= e^{\frac{\pi m z^2}{v}}\phi(z,\tau).
$$
Then $\phi^*$ satisfies the modular forms transformation law of an element of $J_{k,0}$.
Write
	$$
	\phi^*(z;\tau) =: \sum_{n}\psi_n^*(\overline{z};\tau) z^n.
	$$
	Using the transformation law of $\phi^\ast$,
	\begin{align*}
	\sum_n \psi_n^*\left(\frac{\overline{z}}{\overline{\tau}}; -\frac{1}{\tau} \right)\left(\frac{z}{\tau}\right)^n = \phi^*\left(\frac{z}{\tau};-\frac{1}{\tau}\right)= \tau^k \phi^*(z;\tau) = \tau^k\sum_n \psi^*_n(\overline{z};\tau)z^n.
	\end{align*}
	Thus, comparing coefficients,
	$$
	\psi_n^*\left(\frac{\overline{z}}{\overline{\tau}}; -\frac{1}{\tau} \right) = \tau^{k+n} \psi_n^* (\overline{z};\tau).
	$$
	Setting $\overline{z}=0$ in particular yields
	\begin{equation*}
	\psi^*_n\left(0;-\frac{1}{\tau}\right) = \tau^{k+n} \psi_n^*(0;\tau).
	\end{equation*}
	
	We are left to show that
	\begin{equation}\label{toshow}
	\psi_n^\ast (0,\tau) = \psi_n(\tau).
	\end{equation}
	 For this, we expand the exponential to obtain
	$$
	\phi^*(z;\tau) = \sum_n z^n \sum_{j \geq 0} \frac{\left(\frac{\pi m }{v}\right)^j}{j!}\chi_{n-2j}(\overline{z};\tau).
	$$
	Thus
	$$
	\psi_n^*(\overline{z};\tau) = \sum_{j \geq 0} \frac{\left(\frac{\pi m }{v}\right)^j}{j!}\chi_{n-2j}(\overline{z};\tau).
	$$
	Setting $\overline{z}=0$ gives \eqref{toshow}.
\end{proof}
We next turn to modular completion which use $E_2$. The proof follows as that of Proposition \ref{nonholP} since
$$
e^{\frac{\pi m z^2}{v}} e^{-\frac{\pi^2 m E_2(\tau)}{3}z^2}
$$
satisfies the modular transformation law of a Jacobi form of weight and index $0$ (which follow by using \eqref{E2trans}).
\begin{proposition}\label{3.2}
	We have
	$$
	\rho_n(\tau) := \sum_{j \geq 0} \frac{\left(\frac{\pi^2 m }{3}E_2(\tau)\right)^j}{j!}\chi_{n-2j}(0;\tau).
	$$
	transforms modular of weight $k+n$.
\end{proposition}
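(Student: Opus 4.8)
The plan is to run the argument of Proposition~\ref{nonholP} essentially verbatim, but with the real-analytic factor $e^{\frac{\pi m z^2}{v}}$ replaced by the holomorphic factor built from $E_2$. Concretely, I would set
$$
\widetilde{\phi}(z;\tau) := e^{\frac{\pi^2 m}{3}E_2(\tau)z^2}\phi(z;\tau)
$$
and claim that $\widetilde\phi$ satisfies the weight-$k$, index-$0$ modular transformation law. Expanding the exponential gives $e^{\frac{\pi^2 m}{3}E_2(\tau)z^2}=\sum_{j\ge0}\frac{1}{j!}\bigl(\frac{\pi^2 m}{3}E_2(\tau)\bigr)^j z^{2j}$, so writing $\widetilde\phi(z;\tau)=:\sum_n\rho_n^*(\overline z;\tau)z^n$ and reading off the coefficient of $z^n$ gives $\rho_n^*(\overline z;\tau)=\sum_{j\ge0}\frac{1}{j!}\bigl(\frac{\pi^2 m}{3}E_2(\tau)\bigr)^j\chi_{n-2j}(\overline z;\tau)$; specializing $\overline z=0$, this is exactly $\rho_n(\tau)$.

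The one computation that has to be checked is the index-$0$ transformation of $\widetilde\phi$, and this is where \eqref{E2trans} enters. Substituting $\tau\mapsto\frac{a\tau+b}{c\tau+d}$ and $z\mapsto\frac{z}{c\tau+d}$ into the $E_2$-factor and using \eqref{E2trans}, the quasimodular anomaly $-\frac{6ic}{\pi}(c\tau+d)$ of $E_2$ produces precisely a factor $e^{-\frac{2\pi i m c z^2}{c\tau+d}}$, while the homogeneous part reproduces $e^{\frac{\pi^2 m}{3}E_2(\tau)z^2}$. Multiplying by the Jacobi transformation of $\phi$, whose index exponential is $e^{\frac{2\pi i m c z^2}{c\tau+d}}$, the two index exponentials cancel, leaving
$$
\widetilde\phi\left(\frac{z}{c\tau+d};\frac{a\tau+b}{c\tau+d}\right)=(c\tau+d)^k\widetilde\phi(z;\tau).
$$
This cancellation is exactly the content of the remark preceding the statement, and it is the only nontrivial step.

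Given this, I would finish as in Proposition~\ref{nonholP}: applying the transformation with $\gamma=\sm{0}{-1}{1}{0}$ and comparing the coefficients of $z^n$ on both sides yields $\rho_n^*(\frac{\overline z}{\overline\tau};-\frac1\tau)=\tau^{k+n}\rho_n^*(\overline z;\tau)$, and specializing $\overline z=0$ gives $\rho_n(-\frac1\tau)=\tau^{k+n}\rho_n(\tau)$; the matrix $\gamma=\sm{1}{1}{0}{1}$ gives invariance under $\tau\mapsto\tau+1$. Since these two matrices generate $\SL_2(\Z)$, this establishes that $\rho_n$ transforms as a modular form of weight $k+n$. I expect no genuine obstacle here, the whole argument being formally identical to Proposition~\ref{nonholP}; the only point requiring care is fixing the constant $\frac{\pi^2 m}{3}$ and the sign in the exponent so that the $E_2$-anomaly matches the index $m$ of $\phi$ exactly.
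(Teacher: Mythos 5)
Your proposal is correct and is essentially the paper's argument: the paper likewise reduces to Proposition \ref{nonholP} by observing (via \eqref{E2trans}) that the $E_2$-exponential differs from $e^{\frac{\pi m z^2}{v}}$ by a factor transforming as a Jacobi form of weight and index $0$, so that $e^{\frac{\pi^2 m}{3}E_2(\tau)z^2}\phi(z;\tau)$ has weight $k$ and index $0$, and then extracts Taylor coefficients. Your sign check of the anomaly cancellation is the one computation the paper leaves implicit, and it is right.
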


\section{Rank moments}\label{4}
In this section, we describe how to build modular objects out of rank moments.

\subsection{Proof of Theorem \ref{rankmain}}\label{4.1}
Using the final expression in \eqref{rank}, it follows that
$$
R(\zeta;q)=\frac{(1-\zeta)\zeta^{-\frac32}}{(q;q)_\infty}A_3(z,-\tau;\tau).
$$
Setting
\begin{multline*}
\widehat{R}(z; \tau):=\\
\left(\frac{R(\zeta; q)}{\zeta^{\frac12}-\zeta^{-\frac12}}q^{-\frac{1}{24}}+\frac12 \zeta^{-1}q^{-\frac16}S(3z+\tau; 3\tau)-\frac12 \zeta q^{-\frac16}S(3 z-\tau; 3\tau)\right)e^{-\frac{\pi^2}{2}E_2(\tau)z^2},
\end{multline*}
it is not hard to see that
\[
\widehat{R}(z; \tau)=\sum_{\ell\geq 0}r_{2\ell-1}(\tau)z^{2\ell-1}.
\]
Indeed, the contribution of $r_{2\ell-1}^-$ follows directly by definiton and for $r_{2\ell-1}^+$ we expand $R$ with \eqref{TaylorR} and use
\begin{align*}
\frac1{\zeta^\frac12-\zeta^{-\frac12}}=\sum_{n\geq -1} B_{n+1}\left(\frac12\right) \frac{(2\pi iz)^n}{(n+1)!},\quad e^{-\frac{\pi^2}{2} E_2(\tau)z^2} = \sum_{m\geq 0} \left(\frac{E_2(\tau)}{8}\right)^m \frac{(2\pi iz)^{2m}}{m!}.
\end{align*}
We then write, using Theorem \ref{2.4},
\[
\widehat{R}(z; \tau)=-\frac{\widehat{A}_3(z,-\tau;\tau)\zeta ^{-1}}{\eta (\tau)} e^{-\frac{\pi^2}{2}E_2(\tau)z^2} =-\frac{\widehat{A}_3(z, 0; \tau)}{\eta(\tau)}e^{-\frac{\pi^2}{2}E_2(\tau)z^2}.
\]
From this one can see, by  \eqref{Appellt}, \eqref{E2trans}, and \eqref{multeta}, that
\[
\widehat{R}\left(\frac{z}{c\tau+d}; \frac{a\tau+b}{c\tau+d}\right)
=\psi^{-1}\left(\begin{matrix} a&b\\c&d\end{matrix}\right) (c\tau+d)^{\frac12}\widehat{R}(z; \tau).
\]
This then directly implies \eqref{ST}.

We next show \eqref{Lsl}. Since $r_{2\ell-1}^+$ is holomorphic, $L(r_{2\ell-1}^+)=0$.
To determine the action of $L$ on $r_{2\ell-1}^-$, we first compute
\begin{equation}
\label{expandS}
\begin{aligned}
&q^{-\frac16} \zeta^{-1} S(3z+\tau;3\tau)\\
&\quad=\sum_{n\in -\frac16+\Z} \left(\sgn\left(n-\frac13\right)-E\left(\left(n+\frac{y}{v}\right)\sqrt{6v}\right)\right)(-1)^{n-\frac56}q^{-\frac{3n^2}{2}} \zeta^{-3n}.
\end{aligned}
\end{equation}
Thus
\begin{multline*}
L\left(r_{2\ell-1}^-(\tau)\right)=\frac{-1}{(2\ell-1)!}\vast[\frac{\partial^{2\ell-1}}{\partial z^{2\ell-1}} \vast(e^{-\frac{\pi^2}{2}E_2(\tau)z^2}\\
\times\sum_{n\in-\frac16+\Z}(-1)^{n-\frac56}q^{-\frac{3n^2}{2}}\zeta^{-3n}L\left(E\left(\left(n+\frac{y}{v}\right)\sqrt{6v}\right)\right)\vast)\vast]_{z=0}.
\end{multline*}
Using $E'(x)=2e^{-\pi x^2}$ gives that
\begin{align*}
L\left(E\left(\left(n+\frac{y}{v}\right)\sqrt{6v}\right)\right)= \sqrt{6}v^{\frac32}\left(n-\frac{y}{v}\right)e^{-6\pi v\left(n+\frac{y}{v}\right)^2}.
\end{align*}
Simplifying the exponential factor
and turning $e^{-\frac{6\pi y^2}{v}}$ into $e^{\frac{3\pi z^2}{2v}}$ since in the end we set $z=0$ (and thus also $\overline{z}=0$),
we obtain
\begin{align*}
&L\left(r_{2\ell-1}(\tau)\right)\\
&=\frac{-\sqrt{6}v^\frac32}{(2\ell-1)!}\left[\frac{\partial^{2\ell-1}}{\partial z^{2\ell-1}}\left(e^{-\frac{\pi^2}{2}\widehat{E}(\tau)z^2}\sum_{n\in -\frac16+\Z}(-1)^{n-\frac56}\left(n-\frac{y}{v}\right) e^{-3\pi in^2\overline{\tau}-6\pi in\overline{z}}\right)\right]_{z=0}.\\
\end{align*}
We compute
\begin{align*}
 \left[\frac{\partial^{2\ell-1}}{\partial z^{2\ell-1}}\left(e^{-\frac{\pi^2}2 \widehat E_2(\tau) z^2} \left(n-\frac{y}{v}\right)\right)  \right]_{z=0}& = -\frac{2\ell-1}{v} \left[ \frac{\partial^{2\ell-1}}{\partial z^{2\ell-1}} e^{-\frac{\pi^2}{2} \widehat E_2(\tau)z^2}\right]_{z=0} \left[ \frac{\partial}{\partial z} y \right]_{z=0}\\ &= \frac{i\left(-\frac{\pi^2 \widehat E_2(\tau)}{2}\right)^{\ell-1}}{2(\ell-1)!} (2\ell-2)! .
\end{align*}
The claim \eqref{eq:partgen} then easily follows, using that
\[
\sum_{n \in -\frac{1}{6}+\mathbb{Z}} (-1)^{n-\frac{5}{6}}e^{-3\pi in^2\overline{\tau}}=-\eta(-\overline{\tau}).
\]

\subsection{Proof of Corollary \ref{DukeCor}}
The statement follows once we show that
\begin{equation}\label{match}
\frac{1}{2\pi i} r_1(\tau)=\mathcal{M}\left(\frac{\tau}{24}\right).
\end{equation}
Firstly by definition
\begin{align*}
\frac{1}{2\pi i}r^+_1(\tau) =\mathcal{M}^+\left(\frac{\tau}{24}\right).
\end{align*}
To match the non-holomorphic parts of both sides of \eqref{match}, we need to prove that
\begin{equation}\label{nonhol3}
\frac{1}{2\pi i}\left[\frac{\partial}{\partial z}\left(\zeta^{-1}q^{-\frac16}S(3z+\tau;3\tau)\right)\right]_{z=0}=\frac{i}{4\sqrt{2}\pi}\int_{-\frac{\overline{\tau}}{24}}^{i\infty} \frac{\eta(24w)}{\left(-i\left(\frac{\tau}{24}+w\right)\right)^{\frac32}}dw.
\end{equation}
For this we show that the Fourier expansions of both sides agree.
To rewrite the left-hand side, we apply $[\frac{\partial}{\partial z}]_{z=0}$ to \eqref{expandS}, giving
\begin{equation}\label{diffz}
\sum_{n\in-\frac16+\Z}(-1)^{n-\frac56}q^{-\frac{3n^2}{2}}\left(i\sqrt{\frac{6}{v}}e^{-6\pi n^2v}
-6\pi in\left(\sgn\left(n-\frac13\right)-E\left(\sqrt{6v}n\right)\right)\right).
\end{equation}
Since $n\in-\frac16+\Z$, we have $\sgn(n-\frac13)=\sgn(n)$. We then use
\begin{equation}\label{incomplete}
\begin{aligned}
\sgn(n)-E\left(\sqrt{6v}n\right)&=\frac{\sgn(n)}{\sqrt{\pi}}\Gamma\left(\frac12, 6\pi n^2 v\right),\\
\Gamma\left(\frac12, u\right)&=-\frac12\Gamma\left(-\frac12, u\right)+\frac1{\sqrt{u}}e^{-u}.
\end{aligned}
\end{equation}
Thus the left-hand side of \eqref{nonhol3} becomes
\begin{equation}\label{incomplete exp}
\frac3{2\sqrt{\pi}}\sum_{n\in-\frac16+\Z}(-1)^{n-\frac56}|n|\Gamma\left(-\frac12, 6\pi n^2 v\right)q^{-\frac{3n^2}{2}}.
\end{equation}

Next we rewrite $\mathcal N$ in terms of the incomplete gamma function. We have
\begin{align*}
\mathcal \int_{-\overline{\tau}}^{i \infty}\frac{e^{\frac{2\pi i}{24}(6k+1)^2w}}{(-i(\tau+w))^\frac32}dw=\frac{i}2\sqrt{\frac{\pi}{3}}|6k+1|\Gamma\left(-\frac12,\frac{\pi}{6}(6k+1)^2v\right)q^{-\frac{(6k+1)^2}{24}}.
\end{align*}
From this one can then conclude that the right-hand side of \eqref{nonhol3} also equals \eqref{incomplete exp}.

\section{Joyce invariants}\label{sec:joyce}
\subsection{Proof of Theorem \ref{joycecomple}}
Setting for odd $\ell$, $g_\ell(\tau):=2 \mathbb{J}_{\ell+1} (\tau)$, it is not hard to see that, with $A_\ell$ defined in \eqref{defineAl},
$$
g_{\ell}(\tau)=\frac{1}{(2\pi i)^\ell}\lim_{w\rightarrow0}\left[\frac{\partial^\ell}{\partial z^\ell}A_2(w,z;\tau)\right]_{z=-\tau}.
$$
Moreover define
\[
\widehat{g}_\ell(\tau):= \frac{1}{(2\pi i)^\ell} \lim_{w\to0} \left[\frac{\partial^\ell}{\partial z^\ell} \left(e^{\frac{\pi zw}{v}}\widehat{A}_2(w,z;\tau)\right)\right]_{z=0}.
\]
Using \eqref{Amod} and the following identity (for $M = \left(\begin{smallmatrix} a & b \\ c & d
\end{smallmatrix} \right) \in\SL_2(\Z)$)
\begin{equation*}
\frac1{\mathrm{Im}(M\tau)}=\frac{(c\tau+d)^2}{v}-2ic(c\tau+d),
\end{equation*}
 we obtain, for $\left(\begin{smallmatrix} a & b \\ c & d
 \end{smallmatrix} \right)\in\SL_2(\Z)$,
\begin{align*}
\widehat{g}_\ell\left(\frac{a\tau+b}{c\tau+d}\right)=(c\tau+d)^{\ell+1}\widehat{g}_\ell(\tau).
\end{align*}
We next show that $\widehat{g}_\ell=2\widehat{\mathbb{J}}_{\ell+1}$. By \eqref{Appellt}, it first follows that
\begin{align*}
\widehat{g}_\ell(\tau) &= \frac{1}{(2\pi i)^\ell}\lim_{w\to 0}\left[\frac{\partial^\ell}{\partial z^\ell}\left(e^{\frac{\pi zw}{v}-2\pi iw}\widehat{A}_2(w,z-\tau;\tau)\right)\right]_{z=0}\\
&=\frac{1}{(2\pi i)^\ell}\lim_{w\to 0}\left[\frac{\partial^\ell}{\partial z^\ell}\left(e^{\frac{\pi zw}{v}}\widehat{A}_2(w,z-\tau;\tau)\right)\right]_{z=0}.
\end{align*}
We then compute by the product rule, using that $S$ is even,
\begin{equation}\label{difference}
\small
\begin{aligned}
&\widehat{g}_\ell(\tau) - g_\ell(\tau)=  \frac{1}{(2\pi i)^\ell}\sum_{j=1}^\ell \binom{\ell}{j} \left(\frac{\pi}{v}\right)^j \lim_{w\to0} w^j \left[\frac{\partial^{\ell-j}}{\partial z^{\ell-j}}A_2(w,z-\tau;\tau)\right]_{z=0}\\
&\quad+\frac{i}{2} \frac{1}{(2\pi i)^\ell}\left[\frac{\partial^\ell}{\partial z^\ell}\sum_{\nu\in\{-1,0\}}\vartheta\left(z+\nu\tau+\frac12; 2\tau\right)S\left(z+\nu\tau+\frac12; 2\tau\right)\right]_{z=0}\\
&= \frac{\delta_{\ell=1}}{4\pi v}+\frac{i}{2} \frac{1}{(2\pi i)^\ell}\vast[\frac{\partial^\ell}{\partial z^\ell}\sum_{\nu\in\{-1,0\}}\vartheta\left(z+\nu\tau+\frac12; 2\tau\right)\\
&\hspace{7cm}\quad\times S\left(z+\nu\tau+\frac12; 2\tau\right)\vast]_{z=0}
\end{aligned}
\normalsize
\end{equation}
since for $j>1$
\begin{align*}
\lim_{w\to0} w^j \left[\frac{\partial^{\ell-j}}{\partial z^{\ell-j}}A_2(w,z-\tau;\tau)\right]_{z=0}=0
\end{align*}
and
\begin{align*}
\lim_{w\to0} w\left[\frac{\partial^{\ell-1}}{\partial z^{\ell-1}}A_2(w,z-\tau;\tau)\right]_{z=0}= \begin{cases} 0 &\text{if }\ell >1,\\ -\frac{1}{2\pi i} &\text{if } \ell=1.\end{cases}
\end{align*}
Noting that $z\mapsto\vartheta(z+\frac{\nu \tau}{2}+\frac12)e^{\pi i\nu z}$ is even, (which may be seen using \eqref{tt1} and the fact that $\vartheta(z)$ is odd) \eqref{difference} becomes
\begin{multline*}
\frac{\delta_{\ell=1}}{4\pi v}+\frac{i}{2} \ \frac{1}{(2\pi i)^\ell}\sum_{\nu\in\{-1,0\}}\sum_{j=1}^{\frac{\ell-1}{2}} \binom{\ell}{2j}
\left[\frac{\partial^{2j}}{\partial z^{2j}}\vartheta_\nu(z;\tau)\right]_{z=0}\left[\frac{\partial^{\ell-2j}}{\partial z^{\ell-2j}}S_\nu(z;\tau)\right]_{z=0},
\end{multline*}
where
\begin{align*}
\vartheta_\nu(z;\tau)&:=e^{\pi i\nu z}q^\frac{\nu^2}4\vartheta\left(z+\nu\tau+\frac12;2\tau\right),\\
S_\nu(z;\tau)&:= e^{-\pi i\nu z}q^{-\frac{\nu^2}4}S\left(z+\nu\tau+\frac12;2\tau\right).
\end{align*}
We now turn the $z$-derivatives into $\tau$-derivatives. Firstly, directly from the definition, we obtain that
$$
\left(\frac{1}{2\pi i}\frac{\partial}{\partial z}\right)^2\vartheta_\nu(z;\tau)=D_\tau\left( \vartheta_\nu(z;\tau)\right),
$$
where $D_\tau:=\frac{1}{2\pi i} \frac{\partial}{\partial \tau}$.
Moreover, using that
$$
\left(4\pi i \frac{\partial}{\partial \tau}+\frac{\partial^2}{\partial z^2}\right)\left(e^{2\pi i\alpha z-\pi i \alpha^2\tau}S(z-\alpha\tau-\beta;\tau)\right)=0,
$$
yields
$$
\left[\frac{\partial^{\ell-2j}}{\partial z^{\ell-2j}}S_\nu(z;\tau)\right]_{z=0}=(-1)^{\frac{\ell-1}{2}-j}(2\pi i)^{\ell-1-2j}D^{\frac{\ell-1}{2}-j}(S_\nu(\tau)),
$$
where
\[
S_\nu(\tau):=\left[\frac{\partial}{\partial z}S_\nu(z;\tau)\right]_{z=0}.
\]
We next show that
\begin{equation}\label{showr}
S_\nu=s_\nu
\end{equation}
with $s_\nu$ given in \eqref{frn}. Rewriting
$$
S_\nu(z;\tau)=-i\sum_{n\in\frac12+\Z}\left(\sgn(n)-E\left(\left(n+\frac{\nu }{2}+\frac{y}{2v}\right)2\sqrt{v}\right)\right)q^{-\left(n+\frac{\nu}{2}\right)^2} e^{-2\pi i\left(n+\frac{\nu}{2}\right)z},
$$
we compute
\begin{align*}
S_\nu(\tau)&=-i\sum_{n\in\frac12+\Z}q^{-\left(n+\frac{\nu}{2}\right)^2}\\
&\quad\times\left(\frac{i}{\sqrt{v}}e^{-4\pi\left(n+\frac{\nu}{2}\right)^2}-2\pi i\left(n+\frac{\nu}{2}\right)\left(\sgn(n)-E\left(\left(n+\frac{\nu}{2}\right)2\sqrt{v}\right)\right)\right).
\end{align*}
Equation \eqref{showr} now follows, by using \eqref{incomplete}.
Thus \eqref{difference} becomes
\begin{align}\label{simplifyDiff}
\frac{\delta_{\ell=1}}{4\pi v}+\frac{(-1)^{\frac{\ell-1}{2}}}{4\pi}\sum_{\nu\in\{-1,0\}}\sum_{j=1}^{\frac{\ell-1}{2}} \binom{\ell}{2j}(-1)^jD^{j}\left(\vartheta_\nu(\tau)\right)
D^{\frac{\ell-1}{2}-j}\left(s_\nu(\tau)\right).
\end{align}
We next aim to prove that \eqref{simplifyDiff} equals ($f_\nu$ and $r_\nu$ have weights $\frac12$ and $\frac32$, respectively)
\begin{multline*}
\frac{\delta_{\ell=1}}{4\pi v} +\frac{(\ell-1)!(-1)^{\frac{\ell-1}{2}}}{\Gamma\left(\frac{\ell}{2}\right)^2 2^{1+\ell}}\sum_{\nu \in \{-1,0\}}[\vartheta_\nu,s_\nu]_{\frac{\ell-1}{2}}\\ =\frac{\delta_{\ell=1}}{4\pi v}+\frac{(\ell-1)!(-1)^{\frac{\ell-1}{2}}}{\Gamma\left(\frac{\ell}{2}\right)^2 2^{1+\ell}}\sum_{\nu\in \{-1,0\}} \sum_{j=0}^{\frac{\ell-1}{2}}(-1)^j \Bigg(\begin{matrix}\frac{\ell}{2}-1\\\frac{\ell-1}{2}-j\end{matrix}\Bigg)\Bigg(\begin{matrix}\frac{\ell}2\\j\end{matrix}\Bigg) D^j(\vartheta_\nu) D^{\frac{\ell-1}2-j}(s_\nu).
\end{multline*}
This follows once we show that
\begin{equation}\label{comparebin}
\binom{\ell}{2j}\frac1{4\pi} (-1)^{\frac{\ell-1}2+j}=\frac{(\ell-1)!(-1)^{\frac{\ell-1}2}}{\Gamma\left(\frac{\ell}{2}\right)^2 2^{1+\ell}}(-1)^j \Bigg(\begin{matrix}\frac{\ell}2-1\\\frac{\ell-j}{2}\end{matrix}\Bigg)\Bigg(\begin{matrix}\frac{\ell}2\\j\end{matrix}\Bigg).
\end{equation}
Now it is not hard to obtain \eqref{comparebin} by using
\begin{align*}
\Gamma(x) \Gamma\left(x+\frac12\right)=2^{1-2x}\sqrt{\pi}\Gamma(2x),\quad
\Gamma(x+1)=x \Gamma(x).
\end{align*}
Combining the above, we obtain (1.10).

We next prove (1.14). Using \eqref{difference}, we compute
$$
L\left(\widehat{g}_\ell(\tau)\right)=-\frac{\delta_{\ell=1}}{4\pi}+\frac{i}{2}\frac{1}{(2\pi i)^\ell}
\left[\frac{\partial^\ell}{\partial z^\ell}\sum_{\nu\in\{-1, 0\}}\vartheta_{\nu}(z;\tau)L\left(S_\nu(z;\tau)\right)\right]_{z=0}.
$$
Now
\[
L\left(S_\nu(z;\tau)\right)= 2iv^{\frac32} e^{-\frac{\pi y^2}{v}}\sum_{n\in\frac{\nu+1}{2}+\Z}\left(n-\frac{y}{2v}\right)e^{-2\pi in^2\overline{\tau}-2\pi in\overline{z}}.
\]
Writing $e^{-\frac{\pi y^2}{v}}=e^{\frac{\pi}{4v}\left(z^2-2z\overline{z}+\overline{z}^2\right)}$
and using that the $\vartheta_{\ell,\nu}$ are even as a function of $z$, gives \eqref{lower}, with $k=\ell+1$.

We are left to show that the $\vartheta_{\ell,\nu}(\tau)$ are almost holomorphic modular forms. Using Proposition \ref{THETtrans} gives, by a straightforward but lengthy calculation, for $\left(\begin{smallmatrix}a&b\\c&d\end{smallmatrix}\right)\in\Gamma_1(4)$,
\[
\vartheta^\ast_\nu\left(\frac{z}{c\tau+d};\frac{a\tau+b}{c\tau+d}\right)
=\chi\left(\begin{matrix}a&b\\c&d\end{matrix}\right)(c\tau+d)^{\frac12}\vartheta^\ast_\nu(z; \tau)
\]
with
\[
\vartheta_\nu^\ast(z;\tau):=\vartheta_\nu(z;r)e^{\frac{\pi z^2}{4v}}, \quad \chi\left(\begin{matrix}a&b\\c&d\end{matrix}\right):=\psi^3\left(\begin{matrix}a&2b\\ \frac{c}{2}&d\end{matrix}\right)i^{\frac c4}.
\]
We are left to show that $\vartheta_\nu^\ast$ is almost holomorphic. We have, using the product rule,
\begin{align*}
\left[\frac{\partial^{\ell-1}}{\partial z^{\ell-1}} {\vartheta}_\nu^\ast(z; \tau)\right]_{z=0}
&=\sum_{j=0}^{\ell-1}\binom{\ell-1}{j}
\left[\frac{\partial^{\ell-1-j}}{\partial z^{\ell-1-j}} \vartheta_\nu(z; \tau)\right]_{z=0}\left[\frac{\partial^j}{\partial z^j} e^{\frac{\pi z^2}{4v}}\right]_{z=0}\\
&=\sum_{j=0}^{\frac{\ell-1}2}\binom{\ell-1}{2j}\left[\frac{\partial^{\ell-1-2j}}{\partial z^{\ell-1-2j}} \vartheta_\nu(z; \tau)\right]_{z=0}\frac{\left(\frac{\pi}{4v}\right)^j}{j!}.
\end{align*}
This yields the claim since $\vartheta_\nu$ is holomorphic.

Corollary \ref{Mellit} directly follows by plugging in and using \eqref{rewritetheta}.


\begin{thebibliography}{99}
\bibitem{An2}
 G. Andrews, \emph{Partitions, Durfee symbols, and the
 Atkin-Garvan moments of ranks},
 Invent. Math. \textbf{169} (2007), 37-73.

\bibitem{AG} A. Atkin and F. Garvan, \emph{Relations between
 the ranks and the cranks of partitions},  Ramanujan J.
 \textbf{7} (2003), 343-366.

\bibitem{AS} A. Atkin and H. Swinnerton-Dyer,
\emph{Some properties of partitions}, Proc. London Math. Soc.
\textbf{4} (1954), 84-106.


\bibitem{Br} K. Bringmann, \emph{On the explicit construction of higher deformations of partition statistics},
Duke Math. J. {\bf 144} (2008), 195-233.

\bibitem{BGM}  K. Bringmann, F. Garvan, and K. Mahlburg, \emph{Partition statistics and quasiharmonic Maass forms},
International Mathematics Research Notices (2009), 63-97.

\bibitem{BF} J. Bruinier and J. Funke, \textit{On two geometric theta lifts}, Duke Math. J. {\bf125} (2004), 45-90.




\bibitem{Dy} F. Dyson, \emph{Some guesses in the theory of
partitions}, Eureka (Cambridge) \textbf{8} (1944), 10-15.

\bibitem{EZ} M. Eichler and D. Zagier, \emph{The theory of Jacobi forms,} Progr. Math., 55. Birkh\"auser Boston,
Boston, MA, (1985).

\bibitem{KZ} M. Kaneko and D. Zagier, \emph{A generalized Jacobi theta function and quasimodular forms,} The moduli
space of curves (Texel Island, 1994), 165-172, Progr. Math. {\bf{129}}, Birkhauser Boston, Massachusetts,
(1995).

\bibitem{MO} A. Mellit and S. Okada, {\it Joyce invariants for $K3$ surfaces and mock theta functions}, Comm. Numb. Theory {\bf 3} (2009), 655-676.

\bibitem{Rad} H. Rademacher, \emph{Topics in analytic number theory}, Die Grundlehren der math. Wiss., Band 169, Springer-Verlag, Berlin, (1973).

\bibitem{Zw} S. Zwegers, \emph{Multivariable Appell functions}, preprint.

\end{thebibliography}
\end{document}